\documentclass[11pt]{amsart}

\usepackage[T1]{fontenc}
\usepackage[utf8]{inputenc}
\usepackage{amsmath,amssymb,amsfonts,amsthm,mathtools}
\usepackage{mathrsfs}
\usepackage{hyperref}
\usepackage{footnote}
\usepackage{multicol}
\usepackage{booktabs}
\usepackage[flushleft]{threeparttable}
\usepackage[font=small,labelfont=bf]{caption}

\hypersetup{
    colorlinks=true,
    linkcolor=blue,
    citecolor=blue,
    urlcolor=blue
}

\theoremstyle{plain}
\newtheorem{theorem}{Theorem}[section]
\newtheorem{lemma}[theorem]{Lemma}
\newtheorem{proposition}[theorem]{Proposition}
\newtheorem{corollary}[theorem]{Corollary}
\newtheorem{example}[theorem]{Example}
\newtheorem{remark}[theorem]{Remark}

\title[Carleson Measures and Kernel Estimates]
{Carleson-Type Measures and Kernel Estimates for Potential--Harmonic Weighted Bergman Spaces on the Unit Ball}

\author{Nihat G\"okhan G\"og\"u\c{s}}
\address{Sabancı University, Faculty of Engineering and Natural Sciences, Istanbul, Turkey}
\email{nggogus@sabanciuniv.edu}

\author{Sinem Yelda S\"onmez}
\address{Altınbaş University, Faculty of Engineering and Architecture, Istanbul, Turkey}
\address{University of Helsinki, Department of Mathematics and Statistics, Helsinki, Finland}
\email{sinem.sonmez@altinbas.edu.tr}
\email{sinem.sonmez@helsinki.fi}

\begin{document}

\begin{abstract}
In this paper, weighted Bergman spaces on the unit ball in $\mathbb{C}^n$ are investigated. A characterization of the Carleson embeddings is established. Pointwise and norm estimates on the reproducing kernel function of weighted Bergman spaces on the unit ball are proved.
\end{abstract}

\subjclass[2020]{30H20, 32A36, 46E15}
\keywords{Bergman spaces, Carleson measures, kernel estimates}

\maketitle

\noindent\textit{Accepted for publication in Hacettepe Journal of Mathematics and Statistics on March 15, 2026.}
\medskip

\section{Introduction and Preliminaries}

Weighted Bergman spaces on the unit ball $\mathbb{B}\subset\mathbb{C}^n$ form a fundamental bridge between several complex variables, operator theory, and modern function space analysis. They constitute a natural setting for the study of boundedness, compactness, and spectral properties of Toeplitz, composition, and integral operators, for sampling and interpolation phenomena, and for precise kernel asymptotics connecting analytic and geometric structures. In recent years, the theory has undergone a notable expansion driven by the introduction of highly nontrivial weight classes, including regular and doubling weights, rapidly increasing weights, and nonradial potential type weights, which provide refined control over the boundary geometry and internal singularities of analytic functions~\cite{KeheZhu}, \cite{PelaezRattya}, \cite{DuLiLiuShi}, \cite{PauZhao}.

Motivated by these developments, the present paper focuses on a large and flexible class of weighted Bergman spaces $A^p_{\omega}(\mathbb{B})$, where the weight $\omega$ is generated by an interior potential term and a boundary harmonic component. More precisely, we consider
\[
\omega_{\mu,q,s,\nu}(z)
= (1-|z|^2)^q\,U_{\mu,s}(z) + P\nu(z),
\qquad
U_{\mu,s}(z)=\int_{\mathbb{B}} G^s(z,w)\,d\mu(w),
\]
where $\mu$ and $\nu$ are finite positive measures on $\mathbb{B}$ and $\partial\mathbb{B}$, respectively, and $G$ denotes the Green's potential kernel. This definition encompasses several well-studied models, such as the purely harmonic case when $\mu=0$, the purely potential case when $\nu=0$, and perturbed radial weights for suitable $(\mu,\nu)$. It also allows one to treat nonradial, measure driven perturbations within a unified analytic framework. A crucial feature of this class is its local comparability on Bergman balls, which permits a precise geometric control essential for testing function constructions and kernel estimates.

Our results provide two central contributions. 
First, we characterize $\tilde{p}$–Carleson measures for $A^p_\omega(\mathbb{B})$ by a local testing condition involving Bergman balls. This characterization gives a transparent and easily verifiable criterion that captures the embedding geometry induced by $\omega_{\mu,q,s,\nu}$. Second, we obtain two sided norm bounds and sharp local and global pointwise estimates for the reproducing kernel. The local estimates yield quantitative equivalence of kernel values on small Bergman balls, while the global ones, proved via an adaptation of Berndtsson’s $\bar\partial$ method, extend the known off diagonal kernel control to this highly nonradial class of weights. Together, these results unify and extend existing Carleson type and kernel frameworks from the planar setting to the unit ball in higher dimensions.

The question of characterizing Carleson measures for weighted Bergman spaces has a long history, originating from Luecking’s testing techniques~\cite{Luecking} and continuing through deep developments by Hastings, Oleinik, and Pavlov, among others. More recent progress has been achieved for regular and rapidly increasing weights~\cite{DuLiWulan}, for geometric and exponential weights defined via intrinsic metrics~\cite{ChoLeePark, ChoPark}, and for nonradial measures using potential theoretic constructions~\cite{PelaezRattya, PauZhao}. Our approach builds upon and connects these threads. It complements the regular or doubling setting established by Du, Li, Liu, and Shi~\cite{DuLiLiuShi} by allowing potential and harmonic perturbations beyond radial symmetry and provides kernel technology compatible with the geometric approaches used in exponential weight theories. The characterization we obtain serves as a unified framework encompassing both Carleson embeddings and kernel estimates under a single structural assumption, namely the local comparability of the weight.

The present work therefore contributes to the ongoing shift in Bergman space theory from purely radial models toward geometrically and analytically flexible settings. The mixed potential and harmonic weights considered here enable one to model both interior and boundary phenomena and to analyze operator theoretic problems, such as Toeplitz and Cesàro operators or Schatten class criteria, in a substantially broader context. Moreover, the kernel estimates we derive are expected to play a role in subsequent studies on sampling, reverse Carleson measures, and integral operator bounds in several complex variables.

The paper is organized as follows. Section 1 recalls the geometric and measure-theoretic background on $\mathbb{B}$ and introduces the basic properties of $\omega_{\mu,q,s,\nu}$, including its local comparability. Section 2 contains the proof of the Carleson embedding characterization based on testing functions adapted to $\omega$. Section 3 establishes norm and pointwise kernel estimates through $\bar\partial$ techniques and localization on Bergman balls.

Let $\mathbb{B}$ denote the unit ball in $\mathbb{C}^n$ and let $dv$ denote the Lebesgue measure on $\mathbb{C}^n$. Let $\omega$ be a strictly positive function on an open neighborhood $N$ of the boundary $\partial \mathbb{B}$ of the unit ball and let $\omega \ge 0$ on the unit ball so that $\inf_{L} \omega >0$ for every compact subset $L$ of $N$. Such a function will be called a weight function.

Firstly, we define $\omega_{\mu,q,s,\nu}$ on the unit ball $\mathbb{B}$ as a weight function, for any positive finite Borel measure $\mu$ on the unit ball, and positive finite Borel measure $\nu$ on the boundary $\partial \mathbb{B}$. Basically, the function $\omega_{\mu,q,s,\nu}$ can be considered as a sum of a weighted potential and a harmonic function. We characterize Carleson measures for these weighted Bergman spaces in the second section.

Let $0<p<\infty$ and let $H(\mathbb{B})$ denote the space of all holomorphic function on the unit ball. The weighted Bergman space $A^p_\omega(\mathbb{B})$ with a weight function $\omega$ is the space of all holomorphic functions $f\in H(\mathbb{B})$ which satisfies the following
\begin{equation*}
    \|f\|_{A^p_\omega(\mathbb{B})}^p := \int_{\mathbb{B}} \vert f(z)\vert^p \omega(z) dv(z) < \infty.
\end{equation*}

In the third section, we provide norm and pointwise estimates on the kernel function of the weighted Bergman space $A^2_{\omega_{\mu,q,s,\nu}}(\mathbb{B})$. In order to prove a global pointwise estimate on the kernel function, we utilize a $\overline{\partial}$ method of Berndtsson, which is obtained in \cite{Berndtsson}.

One can find the following motivations in \cite{KeheZhu}. For any $w\in\mathbb{B}$, the orthogonal projection from $\mathbb{C}^n$ onto the one dimensional subspace $[w]$ generated by $w$ is defined by
\begin{equation*}
    P_w(z)=\frac{\langle z,w\rangle}{\vert w\vert^2}w,
\end{equation*} for $z\in \mathbb{C}^n$. The orthogonal projection from $\mathbb{C}^n$ onto $\mathbb{C}^n \ominus [w]$ is defined by
\begin{equation*}
    Q_w(z)= z- \frac{\langle z,w\rangle}{\vert w\vert^2}w,
\end{equation*} for $z\in \mathbb{C}^n$. For any $w\in\mathbb{B}$, the involution map $\varphi_w$ is defined by 
\begin{equation*}
    \varphi_w(z)= \frac{w-P_w(z)-s_wQ_w(z)}{1-\langle z,w\rangle},
\end{equation*} for $z\in \mathbb{B}$, where $s_w=\sqrt{1-\vert w\vert^2}$.

For each $ z\in \mathbb{B}$, the involution map $\varphi_z$ satisfies the following property
\begin{equation}\label{involution}
1-\vert \varphi_z(w)\vert^2 =\frac{(1-\vert z\vert^2)(1-\vert w\vert^2)}{\vert 1-\langle w,z\rangle\vert^2 },
\end{equation} for $w\in \mathbb{B}$, \cite{KeheZhu}.

The notation $f\lesssim g$ means that there exists a positive constant $C$ so that $f\le Cg$. If $f\lesssim g$ and $g \lesssim f$, then we use the notation $f\approx g$.

\begin{lemma}\cite{Rudin}
    Let $n\ge 2$ be an integer, then there are constants $C_1$ and $C_2$ such that for all $z\in \mathbb{B}-\{0\}$
    \begin{equation*}
        C_1 (1-\vert z\vert^2)^n \vert z\vert^{-2(n-1)} \le g(z) \le C_2 (1-\vert z\vert^2)^n \vert z\vert^{-2(n-1)},
    \end{equation*} where
    \begin{equation*}
        g(z)= \frac{n+1}{2n} \int_{\vert z\vert}^1 r^{-2n+1} (1-r^2)^{n-1} dr,
    \end{equation*} which is called the (invariant) Green's function of $\mathbb{B}$.
\end{lemma}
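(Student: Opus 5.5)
Write $t=|z|\in(0,1)$. Then $g(z)=\frac{n+1}{2n}\,I(t)$, where
\[
I(t)=\int_t^1 r^{-2n+1}(1-r^2)^{n-1}\,dr .
\]
The substitution $u=r^2$, $du=2r\,dr$, gives $r^{-2n+1}\,dr=\tfrac12 u^{-n}\,du$, so
\[
I(t)=\tfrac12\int_{t^2}^1 u^{-n}(1-u)^{n-1}\,du .
\]
The claim reduces to showing that $I(t)\approx (1-t^2)^n t^{-2(n-1)}$ uniformly for $t\in(0,1)$. Set $a=t^2$. We must show that
\[
J(a)=\int_a^1 u^{-n}(1-u)^{n-1}\,du
\]
is comparable to $(1-a)^n a^{-(n-1)}$ for $a\in(0,1)$. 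The plan is to treat the regions $a\ge 1/2$ and $a<1/2$ separately, and use continuity on compact subintervals if needed.

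\emph{Case $a\ge 1/2$.} On $[a,1]$ we have $1/2\le u\le 1$, so $1\le u^{-n}\le 2^n$. Hence
\[
J(a)\approx \int_a^1(1-u)^{n-1}\,du=\frac{(1-a)^n}{n}.
\]
Since $a^{-(n-1)}\in[1,2^{n-1}]$ here, this is comparable to $(1-a)^n a^{-(n-1)}$.

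\emph{Case $a<1/2$.} We need $J(a)\approx a^{-(n-1)}$, because $(1-a)^n\in[2^{-n},1]$. For the upper bound, use $(1-u)^{n-1}\le 1$:
\[
J(a)\le\int_a^\infty u^{-n}\,du=\frac{a^{-(n-1)}}{n-1}.
\]
This step is where $n\ge 2$ is used, since the integral diverges when $n=1$. For the lower bound, restrict to $[a,1/2]$, where $(1-u)^{n-1}\ge 2^{-(n-1)}$:
\[
J(a)\ge 2^{-(n-1)}\int_a^{1/2}u^{-n}\,du=\frac{2^{-(n-1)}}{n-1}\bigl(a^{-(n-1)}-2^{n-1}\bigr).
\]
This is $\gtrsim a^{-(n-1)}$ when $a\le 1/4$, say, because then $2^{n-1}\le 2^{-(n-1)}a^{-(n-1)}$. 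On the remaining range $1/4\le a\le 1/2$, both $J(a)$ and $(1-a)^na^{-(n-1)}$ are continuous and strictly positive, so their ratio is bounded above and below by compactness.

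Combining the cases gives constants $C_1,C_2$ depending only on $n$. The only step needing care is the lower bound near $a=0$, which requires keeping the singular part $u^{-n}$ dominant. The split at $a=1/4$ handles this, so I expect no serious obstacle; the rest is routine bookkeeping of constants.
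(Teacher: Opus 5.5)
Your proof is correct and complete. The substitution $u=r^2$ is carried out correctly. The split at $a=|z|^2=1/2$ captures the boundary factor $(1-|z|^2)^n$. The further split at $a=1/4$ keeps the singular term $u^{-n}$ dominant near the origin, and all constants depend only on $n$.

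The paper gives no argument of its own for this lemma; it only cites Rudin. So your proof is not a variant of a proof in the paper but a self-contained, elementary verification of the cited fact. The citation buys brevity. Your computation buys transparency: it shows the comparability is a purely one-variable fact about the radial profile of $g$. It also shows exactly where $n\ge 2$ enters, namely the convergence of $\int_a^\infty u^{-n}\,du$. For $n=1$ the formula gives $g(z)=-\log|z|$, which is not comparable to $1-|z|^2$ near $0$.

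If you want fully explicit constants, replace the compactness step on $[1/4,1/2]$ with monotonicity. $J$ is decreasing in $a$, so on that interval $J(1/2)\le J(a)\le J(1/4)$. Meanwhile $\tfrac12\le (1-a)^n a^{-(n-1)}\le 4^{n-1}$ there.
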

Moreover, we utilize the definition of Green's function in \cite{Ullrich}. For $z$, and $w\in \mathbb{B}$,
\begin{equation*}
    G(z,w):=g(\varphi_z(w)).
\end{equation*}
Therefore, the following equivalence is obtained
\begin{equation}\label{GreenEqu}
    G (z,w) \approx (1-\vert \varphi_z(w)\vert)^n \vert \varphi_z(w)\vert^{-2(n-1)},
\end{equation} for $z,w\in \mathbb{B}$.

The pseudohyperbolic metric $\gamma$ on $\mathbb{B}$ is defined by $\gamma (z,w)=\vert \varphi_z(w)\vert$, for $z,w\in \mathbb{B}$. The pseudohyperbolic metric ball at $z\in \mathbb{B}$ is defined by
\begin{equation*}
    \Delta (z,r):= \{ w\in \mathbb{B} : \gamma(z,w)<r\}, \quad r>0.
\end{equation*} 
For $z,w\in \mathbb{B}$, the Bergman metric $\beta$ is defined by the following equation,
\begin{equation*}
    \beta (z,w) = \frac{1}{2} \log \frac{1+\vert \varphi_z(w)\vert}{1-\vert \varphi_z(w)\vert}.
\end{equation*}
The Bergman metric ball at $z\in \mathbb{B}$ is defined by
\begin{equation*}
D(z,r):=\{w\in \mathbb{B}: \beta (z,w)<r \}, \quad r>0.
\end{equation*}

 The relation between the pseudohyperbolic metric $\gamma$ and the Bergman metric $\beta$ is given in \cite{KeheZhu} by the following equation,
 \begin{equation*}
     \gamma (z,w)=\tanh (\beta (z,w)),
 \end{equation*} for $z,w\in \mathbb{B}$. In addition, $\Delta (z,r)$ is an ellipsoid, the precise formulas are given in \cite{Duren}, \cite{Rudin}, and \cite{KeheZhu}. For the given $z\in \mathbb{B}$ and $r\in(0,1)$,
 \begin{equation*}
     \beta:=\frac{1-r^2}{1-r^2\vert z\vert^2} z, \quad \text{and} \quad t=\frac{1-\vert z\vert^2}{1-r^2\vert z\vert^2}
 \end{equation*} are defined. Note that $\beta \in [z]$. Hence, the point $\beta$ is the center of ellipsoid $\Delta (z,r)$. The intersection of $\Delta (z,r)$ with $[z]$ is the one dimensional disk of radius $rt$, and the intersection of $\Delta (z,r)$ with the ortho-complement $[z]$ is an $(n-1)$ dimensional Euclidean ball of radius $r\sqrt{t}$.

\begin{lemma}[\cite{KeheZhu}] For any $z\in \mathbb{B}$ and $r>0$ the volume of the Bergman metric ball $D(z,r)$ is given by
\begin{equation*}
    v(D(z,r))=\frac{R^{2n} (1-\vert z\vert^2)^{n+1}}{(1-R^2\vert z\vert^2)^{n+1}},
\end{equation*} where $R=tanh(r)$. Moreover, for any positive $r$ there exist positive constants $c_r$ and $C_r$ such that
\begin{equation}\label{VolumeBergman}
    c_r(1-\vert z\vert^2)^{n+1} \le v(D(z,r)) \le C_r(1-\vert z\vert^2)^{n+1},
\end{equation} for every $z\in \mathbb{B}$.

\end{lemma}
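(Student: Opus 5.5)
The plan is to use Möbius invariance of Lebesgue volume under the automorphisms $\varphi_z$, together with the known Jacobian formula. First note that $D(z,r)=\varphi_z(D(0,r))$. This holds because $\beta(z,w)$ depends only on $|\varphi_z(w)|$, and $\varphi_z$ is an involution. Also $D(0,r)$ is the Euclidean ball $\{|u|<R\}$ with $R=\tanh r$, since $\gamma(0,u)=|u|$ and $\gamma=\tanh\beta$.

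The real Jacobian of $\varphi_z$ is
\[
(J_{\mathbb{R}}\varphi_z)(u)=\left(\frac{1-|z|^2}{|1-\langle u,z\rangle|^2}\right)^{n+1}.
\]
This follows from the complex Jacobian being $(-1)^n(1-|z|^2)^{(n+1)/2}(1-\langle u,z\rangle)^{-(n+1)}$, which we take from \cite{KeheZhu}. Hence
\[
v(D(z,r))=(1-|z|^2)^{n+1}\int_{|u|<R}\frac{dv(u)}{|1-\langle u,z\rangle|^{2(n+1)}}.
\]

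The main step is evaluating this integral. I would rescale $u=R\xi$, so it becomes $R^{2n}\int_{\mathbb{B}}|1-\langle \xi,Rz\rangle|^{-2(n+1)}\,dv(\xi)$. Now expand $(1-\langle\xi,a\rangle)^{-(n+1)}=\sum_k \frac{\Gamma(n+1+k)}{k!\,\Gamma(n+1)}\langle \xi,a\rangle^k$ with $a=Rz$. The terms are orthogonal homogeneous polynomials, and they satisfy
\[
\int_{\mathbb{B}}|\langle\xi,a\rangle|^{2k}\,dv=\frac{k!\,n!}{(n+k)!}\,|a|^{2k}\cdot\frac{n}{n+k}\cdot v(\mathbb{B})\cdot(\text{normalizations}).
\]
Equivalently, one can use the reproducing property of the unweighted Bergman kernel $K(\xi,a)=(1-\langle\xi,a\rangle)^{-(n+1)}$ (normalized volume). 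This gives $\int_{\mathbb{B}}|K(\xi,a)|^2\,dv(\xi)=K(a,a)=(1-|a|^2)^{-(n+1)}$. This reproducing-kernel shortcut is the cleanest route, and it yields exactly $R^{2n}(1-R^2|z|^2)^{-(n+1)}$, with the volume normalized so that $v(\mathbb{B})=1$, as in \cite{KeheZhu}. The only point needing care is keeping track of this normalization.

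For the two-sided estimate \eqref{VolumeBergman}, fix $r$, so $R<1$. Since $1-R^2\le 1-R^2|z|^2\le 1$, the factor $R^{2n}(1-R^2|z|^2)^{-(n+1)}$ lies between $R^{2n}$ and $R^{2n}(1-R^2)^{-(n+1)}$. Take $c_r=R^{2n}$ and $C_r=R^{2n}(1-R^2)^{-(n+1)}$.

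I expect the only real obstacle to be justifying the Jacobian formula for $\varphi_z$. I would quote it from \cite{KeheZhu} rather than derive it. If a derivation were required, I would compute it at $u=0$, where $\varphi_z'(0)=-(1-|z|^2)P_z-s_zQ_z$ with determinant of modulus $(1-|z|^2)^{(n+1)/2}$. I would then transport this to general points via the group law and the identity \eqref{involution}.
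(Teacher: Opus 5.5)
Your argument is correct and is precisely the standard proof of this lemma in Zhu's book, which the paper simply cites without giving a proof of its own: $D(z,r)=\varphi_z(R\mathbb{B})$, the change of variables with real Jacobian $\bigl(\frac{1-|z|^2}{|1-\langle u,z\rangle|^2}\bigr)^{n+1}$, the rescaling $u=R\xi$, and the identity $\int_{\mathbb{B}}|1-\langle \xi,a\rangle|^{-2(n+1)}\,dv(\xi)=(1-|a|^2)^{-(n+1)}$ from the reproducing property. Your normalization caveat $v(\mathbb{B})=1$ is apt: for the unnormalized Lebesgue measure the paper declares, the exact formula picks up a factor $\pi^n/n!$, which does not affect the two-sided estimate.

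The only slip is in your aside on moments. It should read $\int_{\mathbb{B}}|\langle\xi,a\rangle|^{2k}\,dv(\xi)=\frac{k!\,n!}{(n+k)!}|a|^{2k}$, without the extra factor $\frac{n}{n+k}$; your reproducing-kernel route does not depend on it.
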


\begin{lemma}\cite{KeheZhu}\label{Lemma1.3}
For every positive $r$ there exists a positive constant $C$ such that 
\begin{equation}\label{Equiv1}
    \frac{1}{C}\le \frac{1-\vert z\vert^2}{1-\vert w\vert^2}\le C, \qquad \text{and} \qquad \frac{1}{C}\le \frac{1-\vert w\vert^2}{\vert 1-\langle w,z\rangle\vert}\le C,
\end{equation} for every $z,w\in \mathbb{B}$ where $\beta (z,w)<r$.
Moreover, if $r$ is bounded above, then the constant $C$ may be chosen to be independent of $r$.
\end{lemma}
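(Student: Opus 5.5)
The proof rests on identity \eqref{involution}, combined with the relation $\gamma=\tanh\beta$. Fix $r>0$ and set $R=\tanh r<1$. If $\beta(z,w)<r$, then $|\varphi_z(w)|<R$, so
\begin{equation*}
1-R^2 \le 1-|\varphi_z(w)|^2=\frac{(1-|z|^2)(1-|w|^2)}{|1-\langle w,z\rangle|^2}\le 1 .
\end{equation*}
This gives the two-sided bound $(1-R^2)\,|1-\langle w,z\rangle|^2\le (1-|z|^2)(1-|w|^2)\le |1-\langle w,z\rangle|^2$. Both inequalities in \eqref{Equiv1} will be extracted from it using only elementary estimates on $|1-\langle w,z\rangle|$.

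\textbf{Step 1: elementary bounds.} By Cauchy--Schwarz,
\begin{equation*}
|1-\langle w,z\rangle|\ge 1-|z||w|\ge 1-|z| \ge \tfrac12(1-|z|^2),
\end{equation*}
and the same holds with $z$ replaced by $w$.

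\textbf{Step 2: the second inequality.} The lower bound of the display, together with $1-|z|^2\le 2|1-\langle w,z\rangle|$ from Step 1, gives
\begin{equation*}
(1-R^2)|1-\langle w,z\rangle|^2\le 2|1-\langle w,z\rangle|(1-|w|^2).
\end{equation*}
Hence $(1-|w|^2)/|1-\langle w,z\rangle|\ge (1-R^2)/2$. The upper bound $(1-|w|^2)/|1-\langle w,z\rangle|\le 2$ is immediate from Step 1 applied with $w$.

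\textbf{Step 3: the first inequality.} The second inequality is symmetric in $z$ and $w$, because $\beta$ is symmetric and $|1-\langle w,z\rangle|=|1-\langle z,w\rangle|$. So both $1-|z|^2$ and $1-|w|^2$ are comparable to $|1-\langle w,z\rangle|$, and dividing the two comparisons yields $\frac{1}{C}\le (1-|z|^2)/(1-|w|^2)\le C$. One admissible constant is $C=4/(1-R^2)=4\cosh^2 r$.

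\textbf{Step 4: uniformity in $r$.} The constant $4\cosh^2 r$ is increasing in $r$. Therefore, for $r\le r_0$ the single constant $4\cosh^2 r_0$ works for every such $r$, which is the final assertion of the lemma.

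The only step needing care is the lower bound in Step 2. There one must use the \emph{lower} bound on $1-|\varphi_z(w)|^2$, which is where $R<1$ enters, rather than the trivial upper bound. Everything else is routine.
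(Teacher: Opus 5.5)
Your argument is correct. The constant $C=4/(1-R^2)=4\cosh^2 r$ works for both inequalities, and since it increases with $r$, the uniformity clause follows at once.

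The paper gives no proof of this lemma; it is quoted from \cite{KeheZhu}. The natural comparison is therefore with the standard argument there, which I am reconstructing rather than quoting. That argument parametrizes the Bergman ball through the automorphism. One writes $w=\varphi_z(u)$ with $|u|<\tanh r$, and combines \eqref{involution} with the second identity $1-\langle \varphi_z(u),z\rangle=(1-|z|^2)/(1-\langle u,z\rangle)$. Both ratios then become closed-form expressions in $u$, namely $\frac{1-|z|^2}{1-|w|^2}=\frac{|1-\langle u,z\rangle|^2}{1-|u|^2}$ and $\frac{1-|w|^2}{|1-\langle w,z\rangle|}=\frac{1-|u|^2}{|1-\langle u,z\rangle|}$. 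These are pinched using $1-\tanh r<|1-\langle u,z\rangle|<1+\tanh r$.

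Your route avoids both the involutive property of $\varphi_z$ and the second identity. It uses only \eqref{involution}, as the paper displays it, plus the global bound $|1-\langle w,z\rangle|\ge\frac12\max\{1-|z|^2,1-|w|^2\}$. This makes it self-contained relative to the paper's preliminaries. The automorphism route instead gives exact formulas, from which sharp constants can be read off, for instance $e^{\pm 2r}$ for the first ratio.

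Two cosmetic remarks. The right half of your two-sided display is never used, since the upper bounds come from Step 1 alone and hold for all $z,w\in\mathbb{B}$. In Step 3 you do not need the symmetry of $\beta$, because the display is already symmetric in $z$ and $w$.
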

Additionally, it is concluded in \cite{KeheZhu} that for any positive $R$ there exists a positive constant $C$ such that
\begin{equation}\label{Equiv2}
    C^{-1} \vert 1- \langle z,a \rangle \vert \le \vert 1-\langle z,b \rangle \vert \le C \vert 1- \langle z,a \rangle \vert,
\end{equation} for every $z\in \mathbb{B}$ where $\beta (a,b)\le R$.

Let $\mu$ be a positive finite Borel measure on $\mathbb{B}$ and $s>0$. We define 
\begin{equation}\label{DefnUmus}
U_{\mu,s}(z):=\int_{\mathbb{B}} G^s(z,w)d\mu(w),
\end{equation} for $z\in \mathbb{B}$. The equivalence \eqref{GreenEqu} provides the following estimate
\begin{equation}\label{UmusGreen}
    U_{\mu,s}(z) \approx \int_{\mathbb{B}} (1-\vert \varphi_z(w)\vert^2)^{ns} \vert \varphi_z(w) \vert^{-2s(n-1)} d\mu(w), \quad z\in \mathbb{B}.
\end{equation} Therefore, the equations \eqref{involution} and \eqref{GreenEqu}, and the inequalities \eqref{Equiv1} and \eqref{Equiv2} imply that  $U_{\mu,s}(z)\approx U_{\mu,s}(a)$ for every $z\in D(a,r)$.

	The (invariant) Poisson kernel is defined by
	\begin{equation*}
		P(z,\xi):=\frac{(1-\vert z\vert^2)^n}{\vert 1-\langle z,\xi\rangle \vert^{2n}},
	\end{equation*} for $z\in \mathbb{B}$ and $\xi \in \partial \mathbb{B}$.
	The (invariant) Poisson integral of a measure $\nu$ on $\partial \mathbb{B}$ is defined
	\begin{equation*}
	P_\nu(z):=\int_{\partial \mathbb{B}} P(z,\xi)d\nu(\xi),
	\end{equation*} for $z\in \mathbb{B}$, \cite{Rudin}.

The definition of the Poisson integral and the inequalities \eqref{Equiv1} and \eqref{Equiv2} imply that  $P_\nu (z)\approx P_\nu (a)$, for every $z\in D(a,r)$. 

\par Let $\mu$ be a positive finite Borel measure on the unit ball $\mathbb{B}$, $\nu$ be a positive finite Borel measure on the boundary $\partial \mathbb{B}$. Assume that $p>0$, $s\geq 0$, $q+s>-1$, $q>-2$, and set
\begin{equation}\label{Defnomega}
\omega_{\mu,q,s,\nu} (z):= (1-\vert z\vert^2)^q U_{\mu,s}(z) + P_\nu(z),
\end{equation} for $z\in \mathbb{B}$. In this paper, we will mainly focus on weight functions defined by \eqref{Defnomega}.

\begin{proposition}\label{Prop:omegaEquiv}
    Let $\omega_{\mu,q,s,\nu}$ be defined as in \eqref{Defnomega} where $\mu$ and $\nu$ are positive finite Borel measures on $\mathbb{B}$ and $\partial \mathbb{B}$, respectively. For any $a\in \mathbb{B}$ and $0<r<1$ the following statement is satisfied:
    \begin{equation} \label{omegaEquivalence}
    \omega_{\mu,q,s,\nu} (z) \approx \omega_{\mu,q,s,\nu} (a), \quad z\in D(a,r).
\end{equation}
\end{proposition}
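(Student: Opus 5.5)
The plan is to treat the two summands of \eqref{Defnomega} separately and then recombine them. Since both summands are nonnegative, an equivalence for each one with constants independent of $a$ and $z$ gives the equivalence for the sum. If $f_1(z)\approx f_1(a)$ and $f_2(z)\approx f_2(a)$ with constants $C$, then
\[
C^{-1}\bigl(f_1(a)+f_2(a)\bigr)\le f_1(z)+f_2(z)\le C\bigl(f_1(a)+f_2(a)\bigr).
\]
So it suffices to show that each of $(1-\vert z\vert^2)^q$, $U_{\mu,s}(z)$ and $P_\nu(z)$ is comparable to its value at $a$ for $z\in D(a,r)$. Here $0<r<1$ is fixed, so the constants may depend on $r$, and by Lemma~\ref{Lemma1.3} they may even be taken uniform in $r$.

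For the factor $(1-\vert z\vert^2)^q$, the first inequality in \eqref{Equiv1} gives $(1-\vert z\vert^2)/(1-\vert a\vert^2)\in[C^{-1},C]$. Raising to the power $q$ yields comparability with constant $C^{\vert q\vert}$, whatever the sign of $q$.

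For the Poisson term, write $P(z,\xi)=(1-\vert z\vert^2)^n\vert 1-\langle z,\xi\rangle\vert^{-2n}$. The numerator is handled by \eqref{Equiv1}. For the denominator, I apply \eqref{Equiv2} with the roles of the variables arranged so that the point of $\mathbb{B}$ in \eqref{Equiv2} is replaced by $\xi\in\partial\mathbb{B}$, which is justified by letting $\rho\xi\to\xi$ with $\rho\to1^-$. This gives $\vert 1-\langle z,\xi\rangle\vert\approx\vert 1-\langle a,\xi\rangle\vert$ uniformly in $\xi$. Hence $P(z,\xi)\approx P(a,\xi)$ uniformly in $\xi$, and integrating against $\nu$ gives $P_\nu(z)\approx P_\nu(a)$.

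For the potential term $U_{\mu,s}$, I start from \eqref{UmusGreen} and the formula \eqref{involution}. The factor $(1-\vert\varphi_z(w)\vert^2)^{ns}$ splits as $(1-\vert z\vert^2)^{ns}(1-\vert w\vert^2)^{ns}\vert 1-\langle w,z\rangle\vert^{-2ns}$. Each $z$-dependent piece is comparable to the same expression with $a$, uniformly in $w$, by \eqref{Equiv1} and \eqref{Equiv2}.

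The main obstacle is the singular factor $\vert\varphi_z(w)\vert^{-2s(n-1)}$, which blows up as $w\to z$ and is not comparable to $\vert\varphi_a(w)\vert^{-2s(n-1)}$ near $w=z$ or $w=a$. To handle it, I split $\mathbb{B}$ into $D(a,R)$ for a suitable $R>r$ and its complement.

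On the complement, both $\vert\varphi_z(w)\vert$ and $\vert\varphi_a(w)\vert$ are bounded below by the triangle inequality for $\beta$. So this factor is harmless there and the pointwise comparability of the integrands holds.

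On $D(a,R)$ pointwise comparability genuinely fails. I would instead use that $\vert\varphi_z(w)\vert$ and $\vert\varphi_a(w)\vert$ are related by the Möbius invariance of $\gamma$, and appeal to the superharmonicity (invariant subharmonicity of $-G$) and mean-value properties of $G^s$ if $s$ is small enough for local integrability. Failing that, I would rely on the paper's preceding remark that \eqref{involution}, \eqref{GreenEqu}, \eqref{Equiv1} and \eqref{Equiv2} already give $U_{\mu,s}(z)\approx U_{\mu,s}(a)$ on $D(a,r)$. This near-diagonal step is where I expect the real difficulty, and it may require restricting the statement or interpreting the equivalence for $U_{\mu,s}$ as given in the preliminaries.

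With the three comparabilities in hand, multiplying the first two and adding the third completes the proof of \eqref{omegaEquivalence}.
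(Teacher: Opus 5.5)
Your treatment of $(1-|z|^2)^q$ and of $P_\nu$ is correct and matches what the paper intends. On the boundary, the limit $\rho\xi\to\xi$ in \eqref{Equiv2} is the right justification. Your off-diagonal estimate for $U_{\mu,s}$ is also fine: for $w\notin D(a,R)$ and $z\in D(a,r)$ one has $\beta(z,w)\ge R-r$, so $|\varphi_z(w)|^{-2s(n-1)}$ lies between $1$ and $\tanh(R-r)^{-2s(n-1)}$.

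\textbf{The gap.} The genuine gap is the near-diagonal step for $U_{\mu,s}$, and no technique can close it. For $s>0$ and $n\ge 2$ the claimed comparability is false without further assumptions on $\mu$. Take $\mu=\delta_b$ for some $b\in\mathbb{B}$. Then $U_{\mu,s}(z)=G(z,b)^s$, which by \eqref{GreenEqu} behaves like $|\varphi_b(z)|^{-2s(n-1)}$ and so tends to $+\infty$ as $z\to b$. On the other hand, $\omega_{\mu,q,s,\nu}(a)<\infty$ for every $a\neq b$. Choose $a\neq b$ with $\beta(a,b)<r$. Then $\omega_{\mu,q,s,\nu}(z)/\omega_{\mu,q,s,\nu}(a)$ is unbounded on $D(a,r)$, so \eqref{omegaEquivalence} fails even if the constants may depend on $a$ and $r$. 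Non-atomic counterexamples also exist. For instance, take $\mu$ to be Lebesgue measure on a small ball $B'$ with $\overline{B'}\subset\mathbb{B}$ and $s\ge n/(n-1)$. Then $G^s(z,\cdot)$ is not locally integrable, so $U_{\mu,s}=+\infty$ on $B'$ but is finite off $\overline{B'}$.

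\textbf{Why the fallbacks do not work.} You cannot rely on the remark following \eqref{UmusGreen}. That remark, and the paper's one-line proof (``follows from the above explanations''), compare the integrands of \eqref{UmusGreen} pointwise via \eqref{involution}, \eqref{Equiv1} and \eqref{Equiv2}. This silently ignores exactly the singular factor $|\varphi_z(w)|^{-2s(n-1)}$ you isolated. The superharmonicity route fails too. Superharmonic functions satisfy only a one-sided mean-value inequality and may equal $+\infty$ at a point, which is precisely what the counterexample does.

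\textbf{What would repair it.} Your diagnosis is right, but the conclusion should be that the statement needs an extra hypothesis or a modified definition. One option is $s=0$. Another is to define $U_{\mu,s}$ with the nonsingular kernel $(1-|\varphi_z(w)|^2)^{ns}$ in place of $G^s$. With that kernel your argument finishes at once: by \eqref{involution}, \eqref{Equiv1} and \eqref{Equiv2} the integrands at $z$ and at $a$ are comparable uniformly in $w$. Integrating against $\mu$ and adding the other two terms then gives \eqref{omegaEquivalence}.
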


\begin{proof}
     The proof follows from the above explanations. 
\end{proof}

The function $\omega_{\mu,q,s,\nu}$ is positive on the unit ball, since each function in the definition of the weight function \eqref{Defnomega} is positive on the unit ball. The function $U_{\mu,s}$ is lower semi continuous on $\mathbb{B}$, which follows from the Fatou's lemma. Besides, the functions $(1-\vert z\vert^2)^q$ and $P_\nu (z)$ are continuous on $\mathbb{B}$. Hence,  $\omega_{\mu,q,s,\nu}$ is lower semi continuous on $\mathbb{B}$. Thus, $\omega_{\mu,q,s,\nu}$ attains its infimum in $K$, which is positive, for every compact subset $K\subset \mathbb{B}$. Therefore,  $\omega_{\mu,q,s,\nu}$ is a weight function on $\mathbb{B}$.

\section{Carleson Measure Characterization}

Let $\eta$ be a positive Borel measure on the unit ball. We say that $\eta$ is a $\Tilde{p}-$Carleson measure for $A^2_{\omega}(\mathbb{B})$, if the following inequality holds

\begin{equation*}
    \left( \int_{\mathbb{B}} \vert f\vert^{\Tilde{p}} d\eta \right)^{1/\Tilde{p}} \lesssim \| f\|_{A^2_{\omega}(\mathbb{B})},
\end{equation*} for every $f\in A^2_{\omega}(\mathbb{B})$.

\begin{lemma}\label{TestFunctionsCar}
	Let $t>0$ and $\omega:=\omega_{\mu,q,s,\nu}$. We define
	\begin{align}  \label{TestFuncsAPQSU}
	f_{w,t}(z)=\left ( \frac{(1-\lvert w\rvert^2)^{t+n-1}}{\omega(w)}\right
	)^{1/p}\frac{1}{( 1- \langle z,w\rangle )^{(2n+t)/p}},
	\end{align} for $z\in \mathbb{B}$ and $t+q>n+1 > 2+q$. Then, $\sup_{w\in\mathbb{B}}\|f_{w,t}\|_{A^p_{\omega_{}}}<\infty$.
 
 Furthermore; if $t+n-1>q+ns$, then the functions $f_{w,t}$ locally uniformly goes to zero as $\vert w\vert$ approaches to $1$.
\end{lemma}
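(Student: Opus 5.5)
The plan is to compute the norm directly and reduce everything to Forelli--Rudin type integral estimates. By definition,
\[
\|f_{w,t}\|_{A^p_\omega}^p=\frac{(1-|w|^2)^{t+n-1}}{\omega(w)}\int_{\mathbb{B}}\frac{\omega(z)}{|1-\langle z,w\rangle|^{2n+t}}\,dv(z),
\]
so it suffices to prove the uniform estimate
\[
I(w):=\int_{\mathbb{B}}\frac{\omega(z)}{|1-\langle z,w\rangle|^{2n+t}}\,dv(z)\lesssim \omega(w)(1-|w|^2)^{-(t+n-1)}.
\]
Since $\omega=(1-|z|^2)^qU_{\mu,s}+P_\nu$ and both pieces are integrals of positive kernels against $\mu$ and $\nu$, I would use Fubini--Tonelli. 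This reduces the problem to two kernel inequalities, one for each piece, that hold pointwise in the parameter of integration.

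\emph{Poisson piece.} For fixed $\xi\in\partial\mathbb{B}$ I would show
\[
\int_{\mathbb{B}}\frac{(1-|z|^2)^n}{|1-\langle z,\xi\rangle|^{2n}|1-\langle z,w\rangle|^{2n+t}}\,dv(z)\lesssim \frac{(1-|w|^2)^{n-(t+n-1)}}{|1-\langle w,\xi\rangle|^{2n}} .
\]
Integrating this in $d\nu(\xi)$ gives exactly $P_\nu(w)(1-|w|^2)^{-(t+n-1)}$. To prove it, I would split $\mathbb{B}$ into the region where $|1-\langle z,\xi\rangle|\gtrsim|1-\langle w,\xi\rangle|$ and its complement.
\begin{itemize}
\item On the first region, pull out $|1-\langle w,\xi\rangle|^{-2n}$ and apply the standard estimate $\int(1-|z|^2)^c|1-\langle z,w\rangle|^{-(n+1+c+d)}dv\approx(1-|w|^2)^{-d}$ for $d>0$.
\item On the complement, $|1-\langle z,w\rangle|\gtrsim|1-\langle w,\xi\rangle|$, by the quasi-triangle inequality for $|1-\langle\cdot,\cdot\rangle|^{1/2}$. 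There one estimates instead by integrating the $\xi$-kernel.
\end{itemize}
The hypotheses $t+q>n+1$ and related ones are used to guarantee that every exponent $d$ appearing is positive.

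\emph{Potential piece.} For fixed $\zeta\in\mathbb{B}$ I need
\[
\int_{\mathbb{B}}\frac{(1-|z|^2)^qG^s(z,\zeta)}{|1-\langle z,w\rangle|^{2n+t}}\,dv(z)\lesssim (1-|w|^2)^{q-(t+n-1)}G^s(w,\zeta),
\]
so that integrating in $d\mu(\zeta)$ gives $(1-|w|^2)^qU_{\mu,s}(w)(1-|w|^2)^{-(t+n-1)}$. I would use \eqref{GreenEqu} and \eqref{involution} to write
\[
G^s(z,\zeta)\approx\frac{(1-|z|^2)^{ns}(1-|\zeta|^2)^{ns}}{|1-\langle z,\zeta\rangle|^{2ns}}\,|\varphi_\zeta(z)|^{-2s(n-1)} .
\]
Then I split into two regions.
\begin{itemize}
\item On the Bergman ball $D(\zeta,r)$, the singular factor $|\varphi_\zeta(z)|^{-2s(n-1)}$ is integrated via the change of variables $z=\varphi_\zeta(u)$. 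This is locally integrable under the implicit restriction on $s$, and the remaining factors are frozen using Lemma \ref{Lemma1.3} and \eqref{Equiv2}.
\item Off $D(\zeta,r)$, the factor $|\varphi_\zeta(z)|^{-2s(n-1)}$ is bounded. What remains is a two-kernel integral of the same shape as in the Poisson case, now with boundary point $\xi$ replaced by the interior point $\zeta$. The same splitting argument, comparing $|1-\langle z,\zeta\rangle|$ with $|1-\langle w,\zeta\rangle|$, should apply.
\end{itemize}
I expect this two-kernel estimate with interior point $\zeta$ to be the main obstacle. Matching the resulting bound to $G^s(w,\zeta)$, rather than only to a cruder quantity, is where the conditions $q>-2$, $q+s>-1$ and $t+q>n+1>2+q$ must be tracked carefully. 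If a direct match fails, I would first try bounding by $(1-|w|^2)^{ns}(1-|\zeta|^2)^{ns}|1-\langle w,\zeta\rangle|^{-2ns}$ and note that this quantity is $\lesssim G^s(w,\zeta)$ whenever $|\varphi_w(\zeta)|$ is bounded away from $0$.

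\emph{Vanishing as $|w|\to1$.} For $|z|\le\rho<1$ we have $|1-\langle z,w\rangle|\ge1-\rho$, so
\[
|f_{w,t}(z)|^p\lesssim_\rho \frac{(1-|w|^2)^{t+n-1}}{\omega(w)} .
\]
It therefore suffices to show $\omega(w)\gtrsim(1-|w|^2)^{q+ns}$ near the boundary.
\begin{itemize}
\item If $\mu\neq0$, choose a compact set $K$ with $\mu(K)>0$. For $\zeta\in K$, \eqref{involution} gives $1-|\varphi_w(\zeta)|^2\approx1-|w|^2$, and $|\varphi_w(\zeta)|$ stays bounded away from $0$ as $|w|\to1$. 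Hence $U_{\mu,s}(w)\gtrsim\mu(K)(1-|w|^2)^{ns}$.
\item If $\mu=0$, use instead $P_\nu(w)\gtrsim\nu(\partial\mathbb{B})(1-|w|^2)^n$, which is at least as good.
\end{itemize}
The assumption $t+n-1>q+ns$ then forces the bound to tend to $0$ uniformly on $|z|\le\rho$.
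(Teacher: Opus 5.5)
Your route is the same as the paper's. You use Tonelli, then a two-kernel Forelli--Rudin estimate for each piece of $\omega$, and the lower bound $\omega(w)\gtrsim(1-|w|^2)^{q+ns}$ for the vanishing part. The Poisson piece and the vanishing part are fine; the Poisson piece only needs $t>1$, which follows from $t>n+1-q>2$.

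\textbf{The gap is in the potential piece.} It sits at your claim that the hypotheses make every exponent $d$ positive. There $d=t+n-1-q-ns$, and $t+q>n+1>2+q$ does not force $d>0$; the condition $t+n-1>q+ns$ is imposed only for the second assertion. When $d<0$ the first assertion is false, so no argument can close this step. Take $\nu=0$ and $\mu\neq0$ supported in $\{|\zeta|\le 1/2\}$. Then $\omega(w)\approx(1-|w|^2)^{q+ns}$ as $|w|\to1$. Integrating only over $\{|z|<1/2\}$ gives $\int_{\mathbb{B}}\omega(z)|1-\langle z,w\rangle|^{-(2n+t)}\,dv(z)\ge c>0$. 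Hence $\|f_{w,t}\|^p_{A^p_\omega}\gtrsim(1-|w|^2)^{t+n-1-q-ns}\to\infty$. For instance, $n=2$, $q=0.9$, $s=1.5$, $t=2.2$ satisfies every stated hypothesis.

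\textbf{Adding $t+n-1>q+ns$ is still not enough.} Your pointwise inequality then fails on $D(\zeta,r)$. Freezing factors there, as you propose, gives a contribution $\approx(1-|\zeta|^2)^{q+n+1}|1-\langle\zeta,w\rangle|^{-(2n+t)}$. Write $A=1-|\zeta|^2$, $B=1-|w|^2$, $C=|1-\langle w,\zeta\rangle|\gtrsim\max(A,B)$. The ratio of this contribution to $(1-|w|^2)^{q-(t+n-1)}(1-|\varphi_w(\zeta)|^2)^{ns}$ is $(A/C)^{q+n+1-ns}(B/C)^{t+n-1-q-ns}$. Now take $\zeta=r\xi$, $w=\rho\xi$ with $1-\rho\gg 1-r$, where $G^s(w,\zeta)\approx(1-|\varphi_w(\zeta)|^2)^{ns}$. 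The ratio is unbounded when $ns-q>n+1$, which the hypotheses allow (e.g.\ $n=2$, $q=0$, $s=1.9$, $t=3.5$). Superpose $\sum_k k^{-2}\delta_{r_k\xi}$ with $1-r_k$ decreasing super-exponentially, and test at $1-\rho_k=(1-r_k)^{\theta}$ with $\theta<1$ close to $1$. This gives a finite $\mu$ with $\sup_w\|f_{w,t}\|_{A^p_\omega}=\infty$, so this is a defect of the statement, not only of your route.

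\textbf{What closes the argument.} Assume in addition $t+n-1>q+ns$, $ns-q<n+1$ and $s(n-1)<n$. Both exponents in the ratio above are then nonnegative, so the $D(\zeta,r)$ part is fine. Off $D(\zeta,r)$ the two-kernel estimate gives $(1-|w|^2)^{q-(t+n-1)}(1-|\varphi_w(\zeta)|^2)^{ns}\lesssim(1-|w|^2)^{q-(t+n-1)}G^s(w,\zeta)$, exactly as you intended.

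You cannot repair this step by following the paper. Its proof replaces $G^s(z,\zeta)$ in the numerator by $(1-|\varphi_z(\zeta)|^2)^{ns}$. That discards the factor $|\varphi_z(\zeta)|^{-2s(n-1)}\ge1$ in the wrong direction. It then quotes a two-kernel estimate without checking these exponent conditions.
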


\begin{proof}

 \begin{align*}
     \|f_{w,t}&\|^p  = \int_{\mathbb{B}} \frac{(1-\vert w\vert^2)^{{t+n-1}}}{\omega (w) \vert 1- \langle z,w\rangle \vert^{2n+t}} \omega(z) dv(z) \\
     & \le \frac{(1-\vert w\vert^2)^{t+n-1}}{\omega(w)}\\
     &\times \left( \int_{\mathbb{B}} \frac{(1-\vert z\vert^2)^q U_{\mu,s}(z)}{\vert 1- \langle z,w\rangle \vert^{2n+t}} dv(z) +  \int_{\mathbb{B}} \frac{P_\nu(z)}{\vert 1- \langle z,w\rangle \vert^{2n+t}} dv(z) \right) \\
     & \le \frac{(1-\vert w \vert^2)^{t+n-1-q}}{U_{\mu,s}(w)} \\
     &\times \int_{\mathbb{B}} \frac{(1-\vert z\vert^2 )^q}{\vert 1- \langle z,w\rangle \vert^{2n+t}}\left(\int_{\mathbb{B}} \frac{(1-\vert z\vert^2)^{ns} (1-\vert \zeta \vert^2)^{ns}}{\vert 1-\langle \zeta ,z\rangle\vert^{2ns}} d\mu(\zeta) \right)dv(z) \\
     & +  \frac{(1-\vert w\vert^2)^{t+n-1}}{P_\nu(w)}  \int_{\mathbb{B}} \frac{1}{\vert 1- \langle z,w\rangle \vert^{2n+t}} \left( \int_{\partial \mathbb{B}} \frac{(1-\vert z\vert^2)^n}{\vert 1-\langle z,a\rangle\vert^{2n}} dv(a) \right) dv(z). 
 \end{align*} 
 
 Let $I$ denote the first term of the above sum and let $II$ denote the second term of the above sum. The sixth statement of Theorem 3.1 in \cite{GLZ} implies that

\begin{align*}
    I & \approx \frac{((1-\vert w\vert^2)^{t+n-1-q})}{U_{\mu,s}(w)} \int_{\mathbb{B}} \frac{(1-\vert \zeta \vert^2)^{ns}}{(1-\vert w\vert^2)^{n-1+q-ns}\vert 1-\langle \zeta ,w\rangle \vert^{2ns} } d\mu(\zeta) \\
    & = \frac{(1-\vert w\vert^2)^{t}}{U_{\mu,s}(w)}  \int_{\mathbb{B}} \frac{(1-\vert \zeta \vert^2)^{ns} (1-\vert w\vert^2)^{ns} }{\vert 1-\langle \zeta ,w\rangle \vert^{2ns} } d\mu(\zeta) \\
    & \le (1-\vert w\vert^2)^{t},
\end{align*} since $t\ge 0$.
Similarly, using the sixth statement of Theorem 3.1 in \cite{GLZ} we obtain the following

\begin{align*}
    II & \approx \frac{(1-\vert w\vert^2)^{t+n-1}}{P_\nu(w)} \int_{\partial \mathbb{B}} \frac{1}{(1-\vert w\vert^2)^{t-1} \vert 1-\langle w,a\rangle\vert^{2n}} dv(a) \\
    & = \frac{1}{P_\nu(w)} \int_{\partial \mathbb{B}} \frac{(1-\vert w\vert^2)^{n} }{\vert 1-\langle w,a\rangle\vert^{2n}} dv(a) =1.
\end{align*} 
Thus, it is concluded that $\sup_{w\in \mathbb{B}} \|f_{w,t}\|^p  \lesssim 2$.

In order to prove the second statement of the lemma, it suffices to show that $f_{w,t}$ goes to zero on compact subsets of the unit ball. Let $K\subset \mathbb{B}$ be a compact subset such that $\mu(K)=C>0$, and let $\vert w\vert <r$ for every $w\in K$ and for some $0<r<1$. For any $z\in \mathbb{B}$ and any $w\in K$, we obtain the following inequality
\begin{equation*}
    \frac{1-\vert w\vert^2}{\vert 1-\langle w,z \rangle\vert^2} \le \frac{1}{(1-r\vert z\vert)^2}.
\end{equation*}
 Hence, for any $z\in \mathbb{B}$, the equation \eqref{involution} implies that
\begin{equation*}
    U_{\mu,s} (z) \gtrsim (1-\vert z\vert^2)^{ns} \frac{\mu(K)}{(1-r)^{2ns}}.
\end{equation*} Moreover, we obtain that
\begin{align*}
    \frac{(1-\lvert w\rvert^2)^{t+n-1}}{\omega(w)} & = \frac{(1-\lvert w\rvert^2)^{t+n-1}}{(1-\vert w\vert^2)^q U_{\mu,s}(w) + P_\nu(w)} \\
    &\lesssim \frac{(1-\vert w \vert^2)^{t+n-1}}{(1-\vert w\vert^2)^{q+2ns}} \\
   & = (1-\vert w\vert^2)^{t+n-1-q-ns},
\end{align*} since $P_\nu (w)\ge 0$.
Therefore, it is concluded that
\begin{align*}
     |f_{w,t}(z)|^p &= \frac{(1-\lvert w\rvert^2)^{t+n-1}}{\omega(w)} \frac{1}{\vert 1- \langle z,w\rangle \vert^{(2n+t)}} \\
     & \lesssim \frac{(1-\vert w\vert^2)^{t+n-1-q-ns}}{\vert 1- \langle z,w\rangle \vert^{(2n+t)}},
\end{align*} which completes the proof.
\end{proof}

A Carleson measure characterization for a positive finite Borel measure is established by Duren, in \cite{Duren}, for the classical Bergman spaces defined on the unit ball $A^p(\mathbb{B})$. A Carleson measure characterization for the weighted Bergman space on the Hartogs triangles is established, where the weight function is a real power of boundary distance function, in \cite{Zhang}.

\begin{theorem}\label{Thm22}
    Let $\omega:=\omega_{\mu,q,s,\nu}$, where $\mu$ and $\nu$ are positive finite Borel measures on $\mathbb{B}$ and $\partial \mathbb{B}$, respectively. Assume that $\eta$ is a positive finite Borel measure on the unit ball. Consider the following statements:
\begin{enumerate}
    \item \label{Thm2.1:1} The measure $\eta$ is a $\Tilde{p}$-Carleson measure for $A^p_\omega(\mathbb{B})$.
    \item \label{Thm2.1:2} We have
    \begin{equation*} 
        \sup_{w\in \mathbb{B}} \frac{\eta (D(w,r))}{ (\omega(w))^{\Tilde{p}/p} (1-\vert w\vert^2)^{(n+1)\Tilde{p}/p} } < \infty .
    \end{equation*} 
\end{enumerate}
    Then, we have that \eqref{Thm2.1:1} implies \eqref{Thm2.1:2} for every positive $\Tilde{p}$; and  \eqref{Thm2.1:2} implies \eqref{Thm2.1:1} for $\Tilde{p}\ge p$.
\end{theorem}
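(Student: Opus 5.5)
For (1)$\Rightarrow$(2) I would test the Carleson inequality against the functions $f_{w,t}$ of Lemma \ref{TestFunctionsCar}, choosing $t$ so that its hypotheses hold and hence $\sup_{w}\|f_{w,t}\|_{A^p_\omega}<\infty$. By (1),
\[
\int_{D(w,r)} |f_{w,t}|^{\tilde p}\,d\eta \le \int_{\mathbb{B}} |f_{w,t}|^{\tilde p}\,d\eta \lesssim \|f_{w,t}\|_{A^p_\omega}^{\tilde p}\lesssim 1 .
\]
For $z\in D(w,r)$, Lemma \ref{Lemma1.3} gives $|1-\langle z,w\rangle|\approx 1-|w|^2$. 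Therefore
\[
|f_{w,t}(z)|^{p}\approx \frac{(1-|w|^2)^{t+n-1}}{\omega(w)(1-|w|^2)^{2n+t}}=\frac{1}{\omega(w)(1-|w|^2)^{n+1}} .
\]
Raising this to the power $\tilde p/p$ and integrating over $D(w,r)$ yields $\eta(D(w,r))\lesssim \omega(w)^{\tilde p/p}(1-|w|^2)^{(n+1)\tilde p/p}$ uniformly in $w$, which is (2). This direction holds for every $\tilde p>0$.

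For (2)$\Rightarrow$(1) with $\tilde p\ge p$, I would use the standard subharmonicity and covering argument. Since $|f|^p$ is plurisubharmonic, the volume estimate \eqref{VolumeBergman} gives, for $z\in\mathbb{B}$,
\[
|f(z)|^p\lesssim \frac{1}{(1-|z|^2)^{n+1}}\int_{D(z,r)}|f|^p\,dv .
\]
Proposition \ref{Prop:omegaEquiv} lets me insert the weight:
\[
|f(z)|^p\lesssim \frac{1}{\omega(z)(1-|z|^2)^{n+1}}\int_{D(z,r)}|f|^p\omega\,dv .
\]
Next I fix an $r$-lattice $\{a_k\}$, so that the balls $D(a_k,r)$ cover $\mathbb{B}$ and each point lies in at most $N$ of the enlarged balls $D(a_k,2r)$. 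For $z\in D(a_k,r)$ we have $D(z,r)\subset D(a_k,2r)$. Combined with the equivalences $\omega(z)\approx\omega(a_k)$ and $1-|z|^2\approx 1-|a_k|^2$, this gives
\[
\sup_{D(a_k,r)}|f|^{\tilde p}\lesssim \bigl(\omega(a_k)(1-|a_k|^2)^{n+1}\bigr)^{-\tilde p/p}\Bigl(\int_{D(a_k,2r)}|f|^p\omega\,dv\Bigr)^{\tilde p/p}.
\]
Multiplying by $\eta(D(a_k,r))$ and using (2), the weight factors cancel. Summing over $k$ then gives
\[
\int_{\mathbb{B}}|f|^{\tilde p}d\eta\lesssim \sum_k\Bigl(\int_{D(a_k,2r)}|f|^p\omega\,dv\Bigr)^{\tilde p/p}\le\Bigl(\sum_k\int_{D(a_k,2r)}|f|^p\omega\,dv\Bigr)^{\tilde p/p}\le N^{\tilde p/p}\|f\|_{A^p_\omega}^{\tilde p}.
\]
The middle inequality uses $\tilde p/p\ge 1$, via $\sum x_k^{\alpha}\le(\sum x_k)^{\alpha}$ for $\alpha\ge1$, and this is exactly where the restriction $\tilde p\ge p$ enters.

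The main obstacle is the comparability bookkeeping. Condition (2) is stated for a fixed radius $r$, while the argument passes between radii $r$ and $2r$ and between the lattice centres $a_k$ and arbitrary points $z$. To keep the constants uniform, I would rely on Proposition \ref{Prop:omegaEquiv} at the larger radius (taking $r$ small enough that $\tanh(2r)<1$ stays within its scope) together with \eqref{Equiv1}. If needed, I would first note that (2) for one radius implies it for any other radius, since a larger ball is covered by boundedly many smaller ones with comparable centres.
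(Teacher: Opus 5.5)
Your proof is correct. Direction (1)$\Rightarrow$(2) is essentially the paper's argument: test against $f_{w,t}$, use $|1-\langle z,w\rangle|\approx 1-|w|^2$ on $D(w,r)$, and integrate. Like the paper, it relies on Lemma~\ref{TestFunctionsCar}, including its restriction $n+1>2+q$; that is a condition on $q$ and cannot be met by choosing $t$.

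For (2)$\Rightarrow$(1) your route is different. The paper rewrites (2) as $\eta(D(w,r))\lesssim\lambda(D(w,r))$ with $d\lambda=(1-|z|^2)^{(n+1)(\tilde p-p)/p}\,\omega^{\tilde p/p}\,dv$. It then imports Luecking's comparison theorem to get $\int|f|^{\tilde p}\,d\eta\lesssim\int|f|^{\tilde p}\,d\lambda$. Finally it splits $|f|^{\tilde p}=|f|^{\tilde p-p}|f|^{p}$ and bounds the first factor with the growth estimate $|f(z)|^p\lesssim\|f\|^p_{A^p_\omega}\,\omega(z)^{-1}(1-|z|^2)^{-(n+1)}$. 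There $\tilde p\ge p$ is used as the sign of the exponent $\tilde p-p$.

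You do the localization by hand with an $r$-lattice, and $\tilde p\ge p$ enters instead through $\sum_k x_k^{\tilde p/p}\le(\sum_k x_k)^{\tilde p/p}$. Your version is self-contained and needs no external comparison theorem. The paper's is shorter and isolates the pointwise growth bound, which it reuses in the kernel estimates of Section~3. In substance the two coincide, since the step taken from Luecking rests on the same sub-mean-value-plus-localization reasoning you write out.

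Two small remarks. First, the estimate $|f(z)|^p\lesssim(1-|z|^2)^{-(n+1)}\int_{D(z,r)}|f|^p\,dv$ does not follow from plurisubharmonicity and the volume formula alone. Euclidean balls inside $D(z,r)$ have volume only $\approx(1-|z|^2)^{2n}$, which is too small. The estimate comes from the sub-mean-value property of $|f\circ\varphi_z|^p$ on $D(0,r)$, followed by the change of variables $u=\varphi_z(w)$. Its real Jacobian $\bigl((1-|z|^2)/|1-\langle w,z\rangle|^2\bigr)^{n+1}$ is $\approx(1-|z|^2)^{-(n+1)}$ for $w\in D(z,r)$. This is the same standard fact behind \eqref{CarlesonEst2}.

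Second, the bookkeeping worry in your last paragraph is moot. Every comparability you invoke is between points at Bergman distance less than $r$ (namely $\zeta\in D(z,r)$ and $z\in D(a_k,r)$), so Proposition~\ref{Prop:omegaEquiv} at radius $r$ suffices. The balls $D(a_k,2r)$ serve only as domains of integration, where finite overlap is all you need.
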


\begin{proof}
    Assume that $\eta$ is a $\Tilde{p}$-Carleson measure for $A^p_\omega(\mathbb{B})$. Let $f_{w,t}$ be the functions defined as in Lemma \ref{TestFunctionsCar}. For any $w\in \mathbb{B}$ and $0<r<1$, we have 
\begin{align*}
  \frac{\eta (D(w,r))}{ (\omega(w))^{\Tilde{p}/p}  (1-\vert w\vert^2)^{(n+1)\Tilde{p}/p} } & \lesssim  \int_{D(w,r)} \frac{(1-\vert w\vert ^2)^{(t+n-1)\Tilde{p}/p} }{(\omega(w))^{\Tilde{p}/p}\vert 1-\langle w,z\rangle \vert^{(2n+t)\Tilde{p}/p}} d\eta(z) \\
 & \le \int_{\mathbb{B}} \vert f_{w,t}(z) \vert^{\Tilde{p}} d\eta (z) \\
 & \le  \| f_{w,t} \|^{\Tilde{p}}_{A^p_\omega(\mathbb{B})}, 
\end{align*} which implies \eqref{Thm2.1:2}.

\par Suppose  that \eqref{Thm2.1:2} is satisfied. Let $f_{w,t}$ be the functions defined as in Lemma \ref{TestFunctionsCar}. It follows from the assumption that
\begin{align*}
    \eta (D(w,r)) & \lesssim (\omega(w))^{\Tilde{p}/p} (1-\vert w\vert^2)^{(n+1)\Tilde{p}/p} \\
    & \lesssim \int_{D(w,r)} (\omega(w))^{\Tilde{p}/p} (1-\vert w\vert^2)^{(n+1)\Tilde{p}/p -n-1} dv(z).
\end{align*} Hence, \cite{Luecking} implies that

\begin{align}\label{CarlesonEst1}
    \int_{\mathbb{B}} \vert f(z)\vert^{\Tilde{p}} d\eta (z) \lesssim \int_{\mathbb{B}} \vert f(z)\vert^{\Tilde{p}} (1-\vert z\vert^2)^{(n+1)(\Tilde{p}-p)/p} (\omega (z))^{\Tilde{p}/p} dv(z).
\end{align}
Moreover, we obtain the following
    \begin{equation}\label{CarlesonEst2}
   \| f\|^{p}_{ A^p_\omega(\mathbb{B})} \ge \int_{D(w,r)} \vert f(z)\vert^{p} \omega(z)dv(z) \gtrsim \vert f(w)\vert^p(1-\vert w\vert^2)^{n+1} \omega (w),
\end{equation} which follows from Proposition \ref{Prop:omegaEquiv}. Hence, if $\Tilde{p}\ge p$ it is concluded from \eqref{CarlesonEst2} that
\begin{equation}\label{CarlesonEst3}
\vert f(z)\vert^{\Tilde{p}-p} \lesssim (1-\vert z\vert^2)^{(n+1)(p-\Tilde{p})/p} (\omega (z))^{(p-\Tilde{p})/p},
\end{equation} for $z\in \mathbb{B}$. As a combination of \eqref{CarlesonEst1} and \eqref{CarlesonEst3} it is concluded that
\begin{equation*}
    \int_{\mathbb{B}} \vert f(z)\vert^{\Tilde{p}} d\eta (z) \lesssim \int_{\mathbb{B}} \vert f(z)\vert^p \omega(z) dv(z) = \|f\|^p_{A^p_{\omega}(\mathbb{B)}},
\end{equation*} which completes the proof. \end{proof}

\subsection{Examples and Counterexamples}

We now illustrate the scope of Theorem \ref{Thm22}, by presenting two representative examples.  
The first one shows a typical potential-harmonic weight that satisfies the Carleson condition,  
while the second demonstrates the necessity of the local comparability assumption through an oscillatory counterexample.

\medskip

\medskip

\begin{example}
    Let
\[
\omega(z) = (1 - |z|^2)^{\alpha} + \int_{\mathbb{B}} G^s(z,w)\, d\mu(w), 
\qquad \alpha > -1,
\]
where $\mu$ is a finite measure supported on a compact subset of $\mathbb{B}$.
For this weight, the potential term $U_{\mu,s}(z)$ is locally comparable on Bergman balls, 
and hence $\omega (z) \approx \omega(a)$ for $z \in D(a,r)$. 
If we define
\[
d\eta(z) = (1 - |z|^2)^{\beta}\, dv(z),
\]
then by Theorem \ref{Thm22}, $\eta$ is a $\tilde{p}$--Carleson measure for $A^p_{\omega}(\mathbb{B})$ if and only if
\[
\beta > (n+1)\!\left(\frac{\tilde{p}}{p} - 1\right) - \alpha.
\]
This recovers the standard radial case when $\mu = 0$, but now holds for every compactly supported interior perturbation of the weight.
\end{example}

 \begin{example}
Let
\[
\omega(z) = (1 - |z|^2)^{\alpha}\bigl(1 + \sin(|z|^{-2})\bigr),
\qquad \alpha > -1.
\]
Here $\omega$ oscillates rapidly near the boundary and fails to satisfy the local comparability property 
$\omega(z) \approx \omega(a)$ for $z \in D(a,r)$. 
Hence, the testing function $f_{w,t}$ of Lemma \ref{TestFunctionsCar} no longer yields a bounded embedding,
and the measure $\eta = (1 - |z|^2)^{\beta}dv(z)$ may fail to be Carleson even when the exponent inequality above is met.
This demonstrates that the local comparability assumption is sharp and cannot be removed.
 \end{example}

\section{Kernel Estimates}

In this section, the weight function $\omega$ refers to $\omega_{\mu,q,s,\nu}$, which is stated in \eqref{Defnomega}. 
The pointwise and norm kernel estimates on harmonically weighted Bergman spaces over domains in the complex plane 
are established in~\cite{Toeplitz}, and these results are extended to more general weight functions in~\cite{NggSys}. 
The aim of this section is to generalize the results of~\cite{NggSys}, obtained for planar domains, 
to the unit ball in $\mathbb{C}^n$. 
Denote by $K_z$ the reproducing kernel of $A^2_\omega(\mathbb{B})$ for $z\in\mathbb{B}$.

\medskip
The reproducing kernel of a weighted Bergman space encodes both geometric and analytic information about the space. 
For classical radial weights $\omega_\alpha(z) = (1 - |z|^2)^\alpha$, 
explicit kernel formulas are available and play a fundamental role in the study of 
Toeplitz and Hankel operators, composition operators, and various embedding theorems 
(see, for instance, \cite{KeheZhu, PelaezRattya2023}). 
However, when the weight is nonradial or arises from a potential--harmonic combination as in~\eqref{Defnomega}, 
the explicit structure of the kernel is lost, and new analytic tools are required to obtain even qualitative bounds. These estimates not only generalize previous planar results but also reveal 
how geometric and analytic features intertwine in higher dimensions.

In what follows, we derive \emph{sharp pointwise and norm estimates} for the reproducing kernels 
associated with potential--harmonic weights. 
Our analysis combines geometric localization on Bergman balls with 
a refined $\bar\partial$-method in the sense of Berndtsson~\cite{Berndtsson1995}, 
which provides quantitative control over the integral solution of the $\bar\partial$-equation. 
This approach clarifies how the geometry of the weight, through its potential and harmonic components, influences 
the analytic decay of the kernel and, consequently, the boundedness of Toeplitz-type operators.

Beyond its intrinsic analytic interest, these kernel estimates form a cornerstone 
for further developments involving reverse Carleson and sampling measures 
for potential--harmonic weights. 
They also offer a unified framework connecting potential theory, 
function-space geometry, and operator theory on the unit ball.

\subsection{Norm Estimate}
 The following theorem provides a norm estimate for the reproducing kernel function of weighted Bergman spaces defined on the unit ball in  $\mathbb{C}^n$.

\begin{theorem}\label{NormEstimate}
	 Let $1<p\le 2$ and $q=\frac{p}{p-1}$ be the conjugate of $p$. Then, we have the following norm estimates
		\begin{equation*}
		\|K_z\|_{A_{\omega}^p}^q \gtrsim \frac{1}{(1-\vert z\vert^2)^{n+1}\omega(z)},
	\end{equation*}	and
 \begin{equation*}
     \|K_z\|_{A_{\omega}^p}^p \lesssim \frac{1}{(1-\vert z\vert^2)^{n+1}\omega(z)},
 \end{equation*} for $z\in \mathbb{B}$.
	In particular, we have
	\begin{equation*}
	\|K_z\|^2_{A^2_\omega(\mathbb{B})}  \approx \frac{1}{(1-\vert z\vert^2)^{n+1}\omega (z)}, \qquad z\in \mathbb{B}.
	\end{equation*} 
\end{theorem}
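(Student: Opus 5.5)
The plan is to reduce everything to two facts about the diagonal value $K_z(z)=\|K_z\|^2_{A^2_\omega(\mathbb{B})}$, plus the local sub-mean value estimate \eqref{CarlesonEst2}. Throughout I write $x(z):=(1-\vert z\vert^2)^{n+1}\omega(z)$.

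\textbf{Step 1 (pointwise evaluation bound).} For any exponent $p>0$ and $f\in A^p_\omega(\mathbb{B})$, estimate \eqref{CarlesonEst2} gives
\begin{equation*}
\vert f(z)\vert^p\lesssim \frac{\|f\|^p_{A^p_\omega}}{x(z)}.
\end{equation*}
This rests on Proposition \ref{Prop:omegaEquiv}, the volume estimate \eqref{VolumeBergman}, and subharmonicity of $\vert f\vert^p$ on $D(z,r)$.

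\textbf{Step 2 (two-sided diagonal bound).} I will show $K_z(z)\approx 1/x(z)$.

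For the upper bound, apply Step 1 with $p=2$ and $f=K_z$:
\begin{equation*}
K_z(z)^2\lesssim \frac{\|K_z\|_2^2}{x(z)}=\frac{K_z(z)}{x(z)}, \qquad\text{so}\qquad K_z(z)\lesssim \frac{1}{x(z)}.
\end{equation*}

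For the lower bound, use the extremal characterization
\begin{equation*}
K_z(z)=\sup_{\|f\|_2\le 1}\vert f(z)\vert^2
\end{equation*}
with the test functions $f_{z,t}$ of Lemma \ref{TestFunctionsCar} at exponent $2$. By that lemma, $\|f_{z,t}\|_2\lesssim 1$ uniformly in $z$. A direct computation gives
\begin{equation*}
\vert f_{z,t}(z)\vert^2=\frac{(1-\vert z\vert^2)^{t+n-1}}{\omega(z)}\,(1-\vert z\vert^2)^{-(2n+t)}=\frac{1}{x(z)},
\end{equation*}
hence $K_z(z)\gtrsim 1/x(z)$. Here $t$ must be chosen to satisfy the hypotheses of Lemma \ref{TestFunctionsCar}, and I will simply take $t$ large.

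\textbf{Step 3 (lower bound for $\|K_z\|^q_{A^p_\omega}$).} Apply Step 1 with exponent $p$ to $f=K_z$ and use the lower bound from Step 2:
\begin{equation*}
\|K_z\|^p_{A^p_\omega}\gtrsim K_z(z)^p\, x(z)\gtrsim x(z)^{1-p}.
\end{equation*}
Raising both sides to the power $q/p=1/(p-1)$ gives $\|K_z\|^q_{A^p_\omega}\gtrsim x(z)^{-1}$, which is the first claimed inequality.

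\textbf{Step 4 (upper bound for $\|K_z\|^p_{A^p_\omega}$).} Since $p\le 2$, Hölder's inequality with exponents $2/p$ and $2/(2-p)$ against the measure $\omega\,dv$ gives
\begin{equation*}
\|K_z\|^p_{A^p_\omega}\le \|K_z\|_2^{p}\Bigl(\int_{\mathbb{B}}\omega\,dv\Bigr)^{1-p/2}\lesssim K_z(z)^{p/2}\lesssim x(z)^{-p/2},
\end{equation*}
using the upper bound of Step 2.

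To pass from $x^{-p/2}$ to $x^{-1}$, I need $x^{1-p/2}\lesssim 1$, that is, $x(z)$ bounded on $\mathbb{B}$. I expect this to be the main obstacle, since $\omega$ itself may be unbounded (because of $P_\nu$, the singularity of $G^s$, or negative $q$). The plan is to bypass pointwise bounds on $\omega$ entirely. By Proposition \ref{Prop:omegaEquiv} and \eqref{VolumeBergman},
\begin{equation*}
x(z)\approx \int_{D(z,r)}\omega\,dv\le \int_{\mathbb{B}}\omega\,dv.
\end{equation*}
This is finite precisely because constants lie in $A^2_\omega(\mathbb{B})$, which is implicit in the finiteness of $\int_{\mathbb{B}}\omega\,dv$ already used in the Hölder step. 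Hence $x$ is bounded, and $x^{-p/2}\lesssim x^{-1}$ gives the second claimed inequality.

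\textbf{The case $p=2$.} Here $q=2$, and Steps 3 and 4 combine to give $\|K_z\|^2_{A^2_\omega(\mathbb{B})}\approx 1/x(z)$. This also follows directly from Step 2.
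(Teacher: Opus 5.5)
Your proof is correct, but it is organized differently from the paper's. You first establish the diagonal estimate $K_z(z)\approx 1/x(z)$ and then derive both $A^p_\omega$ bounds from it.

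\textbf{Lower bound.} The paper tests $K_z$ against $g=f_{t,z}^{p/q}\in A^q_\omega$, using $|g(z)|=|\langle g,K_z\rangle|\le \|g\|_{A^q_\omega}\|K_z\|_{A^p_\omega}$. It phrases this through the duality $(A^p_\omega)^*\cong A^q_\omega$, although only this H\"older direction is used. That argument requires the test functions at exponent $p$ and the reproducing identity for functions in $A^q_\omega$. Your Step~3 needs no pairing at all. The exponent-$p$ point-evaluation bound applied to $K_z$ itself gives $\|K_z\|^p_{A^p_\omega}\gtrsim K_z(z)^p\,x(z)$. The only test functions you use are the exponent-$2$ ones behind $K_z(z)\gtrsim 1/x(z)$.

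\textbf{Upper bound.} The paper combines $\|K_z\|^2_{A^p_\omega}\lesssim K_z(z)$ (H\"older) with $K_z(z)^p\lesssim \|K_z\|^p_{A^p_\omega}/x(z)$ (sub-mean value at exponent $p$). It then cancels a factor $\|K_z\|^p_{A^p_\omega}$, so it needs neither the diagonal upper bound nor any control of $x$. Your route passes through $\|K_z\|^p_{A^p_\omega}\lesssim x(z)^{-p/2}$, which is sharper than the stated bound when $p<2$. It then requires $\sup_{\mathbb{B}}x<\infty$, which you correctly deduce from $x(z)\approx\int_{D(z,r)}\omega\,dv\le\int_{\mathbb{B}}\omega\,dv$.

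\textbf{Two caveats.} Both are shared with the paper rather than defects of your route.
\begin{enumerate}
\item Both arguments need $\int_{\mathbb{B}}\omega\,dv<\infty$; the paper uses it tacitly in $\|K_z\|_{A^p_\omega}\le\|K_z\|_{A^2_\omega}$. Your sentence justifying it is circular, so state it as an assumption. It always holds for the $P_\nu$ term, but it can fail for the potential term, e.g.\ when $\mu$ has an atom and $s\ge n/(n-1)$.
\item Taking $t$ large does not secure the hypothesis $n+1>2+q$ of Lemma~\ref{TestFunctionsCar}. There $q$ is the exponent in the weight, not the conjugate index, so it cannot be adjusted. The paper invokes the lemma in the same way.
\end{enumerate}
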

\begin{proof}
Since the dual $(A_{\omega}^p)^*$ of $A_{\omega}^p$ is isomorphic to $A_{\omega}^q$, we have 
\begin{equation*}
	\|K_z\|_{A_{\omega}^p}=\sup\left\{\left\lvert\int_{\mathbb{B}} \overline{K_z}g\omega dv \right\rvert : \|g\|_{A_{\omega}^q}=1 \right\}.
\end{equation*} Using the test functions given by \eqref{TestFuncsAPQSU}, we obtain that
\begin{align*}
\|K_z\|^q_{A^p_\omega} & \gtrsim \vert \langle f_{t,z}^{p/q},K_z\rangle\vert^q  =\vert f_{t,z}^{p/q}(z)\vert^q \\
&= \frac{1}{(1-\vert z\vert^2)^{n+1} \omega(z)}.
\end{align*}
The subharmonicity of $\vert K_z\vert$ implies that
\begin{align*}
\omega(w)\vert K_z(w)\vert^p &\lesssim \frac{1}{(1-\vert w\vert^2)^{n+1}} \int_{D(w,r)} \vert K_z(\zeta)\vert^p \omega(\zeta)dv(\zeta)\\
& \le \frac{1}{(1-\vert w\vert^2)^{n+1}}\|K_z\|^p_{A^p_\omega}.
\end{align*} 
Hence, the following estimate is obtained by taking $w=z$, 
\begin{align*}
\frac{\|K_z\|^{2p}_{A^p_\omega}}{\|K_z\|^p_{A^p_\omega}}  \le & \frac{\vert K_z(z)\vert^p}{\|K_z\|^p_{A^p_\omega}} =\frac{\vert\langle K_z,K_z\rangle\vert^p }{ \|K_z\|^p_{A^p_\omega}}  \\
& \lesssim \frac{1}{(1-\vert z\vert^2)^{n+1} \omega(z) },
\end{align*} which completes the proof.
	
\end{proof}

As a combination of Proposition \ref{Prop:omegaEquiv} and Theorem \ref{NormEstimate}, it is concluded that for any $a \in \mathbb{B}$
\begin{equation}\label{kernelestimate} \|K_z\|^2_{A^2_\omega(\mathbb{B})}  \approx  \|K_a\|^2_{A^2_\omega(\mathbb{B})},
\end{equation} where $z\in D(a,r)$. Hence, for  $r\in(0,1)$ there exists a constant $a_r$ such that
\begin{equation} \label{KernelConstant}
    \frac{1}{a_r} \|K_z\|_{A^2_\omega(\mathbb{B})} \le \|K_a\|_{A^2_\omega(\mathbb{B})} \le a_r \|K_z\|_{A^2_\omega(\mathbb{B})},
\end{equation} for $z\in D(a,r)$.

\subsection{Pointwise Estimate}

In this subsection, our aim is to provide local and global pointwise estimates on the reproducing kernel function of the weighted Bergman space $A^2_\omega(\mathbb{B})$, where $\omega:=\omega_{\mu,q,s,\nu}$. We define the function $\rho$ on the unit ball as $\rho(z)=1-\vert z\vert^2$, for $z\in\mathbb{B}$.
One variable version of Lemma \ref{Lem32} and Corollary \ref{kernelEsCor} can be found in \cite{Toeplitz} for harmonically weighted Bergman spaces, which are extended in \cite{NggSys} to more general weighted Bergman spaces.

For any $z\in \mathbb{B}$ the Euclidean ball centered at $z$ and radius of $\alpha\rho(z)$ is given by $B_{\alpha} (z):= B(z, \alpha \rho(z)):=\{w\in \mathbb{B} : \vert z-w\vert < \alpha\rho(z)\}$, for $\alpha \in (0,1)$.

 Let $0<r<1$ and let $z\in \mathbb{B}$. If $w\in D(z,r)$, then $\beta (z,w)<r$. 
 Thus, the equation \eqref{involution} implies that
 \begin{equation}\label{LemmaEq1}
    1-\vert \varphi_z(w)\vert^2 =\frac{(1-\vert z\vert^2)(1-\vert w\vert^2)}{\vert 1-\langle w,z\rangle\vert^2 } \ge \frac{4e^{2r}}{(e^{2r}+1)^2}. 
 \end{equation} Hence, we obtain that
 \begin{align*}
     r_1 := \frac{4e^{2r}}{(e^{2r}+1)^2} <1.
 \end{align*}

We will determine a proper constant $\tilde\alpha $ such that the Euclidean ball $B (z, \tilde\alpha (1-\vert z\vert^2))$ is a subset of the Bergman metric ball $D(z,r)$. In order the Euclidean ball $B(z,\tilde\alpha (1-\vert z\vert^2))$ to be a subset of $D(z,r)$, we need to have that 
\begin{equation}\label{TildeAlpha}
    2\tilde\alpha (1-\vert z\vert^2) \le r_1 t,
\end{equation} where $t= \frac{1-\vert z\vert^2}{1-(r_1)^2\vert z\vert^2}$. In order to guarantee the inequality \eqref{TildeAlpha}, we proceed by the following inequality
\begin{equation*}
    \tilde\alpha (1-\vert z\vert^2) \le \frac{r_1t}{4}.
\end{equation*}
Hence, it suffices to determine an $\tilde\alpha$ such that  the following estimate is satisfied 
\begin{equation*}
     \tilde\alpha (1-\vert z\vert^2) \le \frac{r_1}{4} (1-\vert z\vert^2),
\end{equation*} since 
\begin{equation*}
   r_1 (1-\vert z\vert^2) \le r_1 \frac{1-\vert z\vert^2}{1-(r_1)^2\vert z\vert^2}.
\end{equation*} Therefore,
by choosing an $\tilde\alpha$ so that $ \tilde\alpha \le \frac{r_1}{4}$,
the inequality \eqref{TildeAlpha} is guaranteed to be satisfied. Consequently, 
\begin{equation}\label{BalphaDz}
    B(z, C(1-\vert z\vert^2)) \subset D(z,r),
\end{equation} for which $C\le \frac{r_1}{4}$.
Additionally, let $L_{z,w}\subset B(z, C(1-\vert z\vert^2))$ be a line segment which connects the points $z$ and $w$, for any $z,w\in \mathbb{B}$. For any $\zeta \in L_{z,w}$ we observe that
\begin{equation}\label{BalphaSubset}
    B(\zeta, \frac{C}{4}(1-\vert z\vert^2)) \subset B(z, C(1-\vert z\vert^2)),
\end{equation} where $C\le \frac{r_1}{4}$.
    Furthermore, by choosing $C=\min \{ 8a_r \sqrt{n}\frac{r_1}{4}, \frac{r_1}{4}\}$ the following statement is obtained,
    \begin{equation*}
        \alpha = \frac{C}{8a_r\sqrt{n}} \le \frac{r_1}{4} < \frac{1}{4}.
    \end{equation*}

\begin{lemma}\label{Lem32}
		Let $r\in (0,1)$. There exists a positive constant $C$ such that for every $f\in A_{\omega}^2(\mathbb{B})$ we have the following estimate
	\begin{equation*}
		\lvert f(z)-f(w)\rvert\ \le \frac{4 a_r \sqrt{n}}{C} \frac{\vert z-w\vert}{1-\vert z\vert^2} \|K_z\|_{A_{\omega}^2} \|f\|_{A_{\omega}^2} ,
	\end{equation*} 
	for every $z,w\in \mathbb{B}$ where $\vert z-w\vert <C(1-\vert z\vert^2)$.
\end{lemma}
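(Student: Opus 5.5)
The plan is to combine the mean value theorem along the segment $L_{z,w}$ with Cauchy estimates for the derivatives of $f$, and to control $|f|$ on small Euclidean balls through the reproducing kernel. Since $|z-w|<C(1-\vert z\vert^2)$, the segment $L_{z,w}$ lies in $B(z,C(1-\vert z\vert^2))$, which by \eqref{BalphaDz} is contained in $D(z,r)$. We write
\begin{equation*}
f(w)-f(z)=\int_0^1 \sum_{j=1}^n \frac{\partial f}{\partial \zeta_j}\bigl(z+\tau(w-z)\bigr)(w_j-z_j)\,d\tau .
\end{equation*}
The Cauchy--Schwarz inequality in $\mathbb{C}^n$ then gives
\begin{equation*}
|f(z)-f(w)|\le \sqrt{n}\,|z-w|\sup_{\zeta\in L_{z,w}}\max_j\left|\frac{\partial f}{\partial\zeta_j}(\zeta)\right| .
\end{equation*}
This is where the factor $\sqrt{n}$ in the statement comes from.

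Next we bound the partial derivatives at a point $\zeta\in L_{z,w}$. By \eqref{BalphaSubset}, the polydisc, or ball, of radius $\frac{C}{4}(1-\vert z\vert^2)$ about $\zeta$ lies in $B(z,C(1-\vert z\vert^2))\subset D(z,r)$. The one-variable Cauchy estimate in each coordinate direction gives
\begin{equation*}
\left|\frac{\partial f}{\partial \zeta_j}(\zeta)\right|\le \frac{4}{C(1-\vert z\vert^2)}\sup_{\xi\in D(z,r)}|f(\xi)| .
\end{equation*}
We then use the reproducing property for each $\xi\in D(z,r)$:
\begin{equation*}
|f(\xi)|=|\langle f,K_\xi\rangle|\le \|K_\xi\|_{A^2_\omega}\|f\|_{A^2_\omega}\le a_r\|K_z\|_{A^2_\omega}\|f\|_{A^2_\omega},
\end{equation*}
where the last step is \eqref{KernelConstant}, which came from Theorem \ref{NormEstimate} together with Proposition \ref{Prop:omegaEquiv}. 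Combining the three bounds yields
\begin{equation*}
|f(z)-f(w)|\le \frac{4a_r\sqrt{n}}{C}\,\frac{|z-w|}{1-\vert z\vert^2}\,\|K_z\|_{A^2_\omega}\|f\|_{A^2_\omega},
\end{equation*}
which is exactly the stated estimate.

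The only delicate point is geometric bookkeeping. We must make sure that the full disc used in the Cauchy estimate, of radius $\frac{C}{4}(1-\vert z\vert^2)$ in each complex coordinate direction through $\zeta$, stays inside $D(z,r)$. This is what \eqref{BalphaSubset} is meant to guarantee: we need $|\zeta-z|+\frac{C}{4}(1-\vert z\vert^2)<C(1-\vert z\vert^2)$. That inequality holds if one first restricts to $|z-w|<\frac{3C}{4}(1-\vert z\vert^2)$, or, as the paper implicitly does, renames the constant $C$; such a restriction only affects the constant. The second point is that \eqref{KernelConstant} is applied with the comparison point $a=z$ and $\xi\in D(z,r)$, which is legitimate since $D(z,r)$ is symmetric in its arguments.
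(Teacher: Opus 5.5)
Your proposal is correct and follows essentially the paper's route (a first-order estimate along $L_{z,w}$, Cauchy estimates on Euclidean balls of radius $\frac{C}{4}(1-\vert z\vert^2)$, the reproducing property with Cauchy--Schwarz, and \eqref{KernelConstant}). It also fixes two loose points in the paper's write-up: your integral form along the segment replaces the paper's equality form of the mean value theorem, for which only the inequality $\vert f(z)-f(w)\vert\le\vert z-w\vert\sup_{L_{z,w}}\vert\nabla f\vert$ is actually available; and your shrinking of $C$ repairs the inclusion \eqref{BalphaSubset}, which fails for $\zeta$ close to $w$. Moreover, since the lemma only asserts existence of some $C=C(r)$, all you need from \eqref{BalphaDz} is that some $C>0$ gives $B(z,C(1-\vert z\vert^2))\subset D(z,r)$ for all $z$, which is true, even though the explicit choice $C\le r_1/4$ there is too large for small $r$ (at $z=0$, $D(0,r)$ is the Euclidean ball of radius $\tanh r$, which is smaller than $\frac14(1-\tanh^2 r)$ when $r$ is small).
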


\begin{proof} Let $0<r<1$. Choose $C=\min \{ 8a_r \sqrt{n}\frac{r_1}{4}, \frac{r_1}{4}\}$. Let $f\in A_{\omega}^2(\mathbb{B})$ be an arbitrary element. Let $L_{z,w}\subset \mathbb{B}$ be a line segment which connects the points $z$, and $w\in \mathbb{B}$. The mean value theorem implies that there exists $\zeta \in L_{z,w}$ such that
	\begin{align}\label{LemmaEq8}
		\lvert f(z)-f(w) \rvert &= \vert z-w\vert \vert \bigtriangledown f(\zeta)\vert.
	\end{align} Furthermore, by applying Cauchy's estimate we obtain the following
 \begin{align}\label{LemmaEq9}
    \vert \bigtriangledown f(\zeta)\vert 
    & = \left( \sum_{j=1}^{n} \left\lvert \frac{\partial f}{\partial z_j}(\zeta) \right\rvert^2  \right) ^{1/2} \nonumber \\
& \le \frac{4\sqrt{n}}{C(1-\vert z\vert^2)} \sup_{\partial B(\zeta, \frac{C}{4}(1-\vert z\vert^2))} \vert f(a)\vert.
 \end{align} Hence, by combining \eqref{LemmaEq8} and \eqref{LemmaEq9} we obtain the desired result: 
 \begin{align*}
     \vert f(z)-f(w)\vert & \le \frac{4 \sqrt{n}}{C} \frac{\vert z-w\vert}{1-\vert z\vert^2} 
     \sup_{\partial  B(\zeta, \frac{C}{4}(1-\vert z\vert^2))} \vert f(a)\vert \\
     & \le \frac{4 \sqrt{n}}{C} \frac{\vert z-w\vert}{1-\vert z\vert^2} 
     \sup_{\partial  B(\zeta, \frac{C}{4}(1-\vert z\vert^2))} \vert \langle f,K_a \rangle \vert \\
     & \le \frac{4 \sqrt{n}}{C} \frac{\vert z-w\vert}{1-\vert z\vert^2}  \sup_{\partial  B(\zeta, \frac{C}{4}(1-\vert z\vert^2))} \|K_a\|_{A^2_\omega(\mathbb{B})} \|f\|_{A^2_\omega(\mathbb{B})} \\
     & \le \frac{4 a_r \sqrt{n}}{C} \frac{\vert z-w\vert}{1-\vert z\vert^2} \|K_z\|_{A^2_\omega(\mathbb{B})} \|f\|_{A^2_\omega(\mathbb{B})},
 \end{align*} where the inequality \eqref{KernelConstant} and the Cauchy Schwartz inequality are applied.

\end{proof}

\begin{corollary}\label{kernelEsCor}
	There exists a positive constant $0<\alpha<1$ such that we have the following equivalence
	\begin{align*}
	\lvert K_{z}(w)\rvert\thickapprox  \|K_z\|_{A_{\omega}^2} \|K_w\|_{A_{\omega}^2},
	\end{align*}
	if  $\vert z-w\vert < \alpha (1-\vert z\vert^2)$.
\end{corollary}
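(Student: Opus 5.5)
The plan is to compare $K_z(w)$ with the diagonal value $K_z(z)=\|K_z\|^2_{A^2_\omega}$, using Lemma \ref{Lem32} applied to $f=K_z$, and then to use \eqref{KernelConstant} to replace one factor $\|K_z\|$ by $\|K_w\|$.

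The upper bound comes first and does not need $w$ to be close to $z$. By the reproducing property and Cauchy--Schwarz,
\[
\lvert K_z(w)\rvert=\lvert\langle K_z,K_w\rangle\rvert\le \|K_z\|_{A^2_\omega}\|K_w\|_{A^2_\omega}.
\]
This holds for all $z,w\in\mathbb{B}$.

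For the lower bound, fix $r\in(0,1)$ and let $C$ and $a_r$ be the constants of Lemma \ref{Lem32} and \eqref{KernelConstant}. Apply Lemma \ref{Lem32} to $f=K_z$, which has $\|f\|_{A^2_\omega}=\|K_z\|_{A^2_\omega}$. For $\lvert z-w\rvert<\alpha(1-\lvert z\rvert^2)$ with $\alpha\le C$ this gives
\[
\lvert K_z(z)-K_z(w)\rvert\le \frac{4a_r\sqrt{n}}{C}\,\frac{\lvert z-w\rvert}{1-\lvert z\rvert^2}\,\|K_z\|^2_{A^2_\omega}\le \frac{4a_r\sqrt{n}\,\alpha}{C}\,\|K_z\|^2_{A^2_\omega}.
\]
Choose $\alpha=\frac{C}{8a_r\sqrt{n}}$, as set up before Lemma \ref{Lem32}. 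Then the right-hand side is at most $\tfrac12\|K_z\|^2_{A^2_\omega}$. Since $K_z(z)=\|K_z\|^2_{A^2_\omega}$, we get $\lvert K_z(w)\rvert\ge \tfrac12\|K_z\|^2_{A^2_\omega}$.

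It remains to convert $\|K_z\|^2$ into $\|K_z\|\,\|K_w\|$. By \eqref{BalphaDz}, $\alpha\le C\le r_1/4$ guarantees $w\in B(z,C(1-\lvert z\rvert^2))\subset D(z,r)$. Hence \eqref{KernelConstant} gives $\|K_w\|\le a_r\|K_z\|$, and therefore
\[
\lvert K_z(w)\rvert\ge \frac{1}{2a_r}\|K_z\|_{A^2_\omega}\|K_w\|_{A^2_\omega}.
\]
This is the desired lower bound.

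The main point to watch is that the constant in Lemma \ref{Lem32} is uniform in $z$. That lemma already absorbs the factor $a_r$ and uses $\|K_z\|$ rather than $\|K_\zeta\|$ at nearby points, so the choice $\alpha<1/4$ is independent of $z$ and $w$. Beyond that, one only needs $\alpha\le C$, so that both Lemma \ref{Lem32} and the inclusion in $D(z,r)$ apply.
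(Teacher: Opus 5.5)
Your proposal is correct and follows essentially the same route as the paper. Cauchy--Schwarz gives the upper bound; Lemma~\ref{Lem32} applied to $f=K_z$ with $\alpha=C/(8a_r\sqrt{n})$ gives $\lvert K_z(w)\rvert\ge \tfrac12 K_z(z)$; and \eqref{KernelConstant} on $D(z,r)$ replaces $\|K_z\|^2_{A^2_\omega}$ by $\|K_z\|_{A^2_\omega}\|K_w\|_{A^2_\omega}$. If anything, your write-up is slightly more careful than the paper's, since you keep the explicit constant $1/(2a_r)$ in the lower bound and check $\alpha\le C$ so that Lemma~\ref{Lem32} actually applies.
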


\begin{proof}
We choose $\alpha=\frac{C}{8a_r\sqrt{n}}$ where $C=\min \{ 8a_r \sqrt{n}\frac{r_1}{16}, \frac{r_1}{16}\}$.
By taking $f=K_z$ in Lemma \ref{Lem32} the following conclusion is obtained
    \begin{align*}
        \vert K_z(z)-K_z(w)\vert & \le \frac{4a_r \sqrt{n}}{C} \frac{\vert z-w\vert}{1-\vert z\vert^2}   K_z(z) \\
        & \le \frac{4a_r \sqrt{n}}{C} \frac{\alpha (1-\vert z\vert^2)}{1-\vert z\vert^2}  K_z(z) \\
        &= \frac{4a_r \sqrt{n}}{C} \frac{C }{8a_r\sqrt{n}}  K_z(z) \\
        & =\frac{1}{2}  K_z(z).
    \end{align*} 
    Hence, the reverse triangle inequality implies that 
    \begin{equation*}
       \frac{1}{2} K(z,z) \le  \vert K(z,w)\vert.
    \end{equation*} 
Recall the following relation,
    \begin{equation*}
         \|K_z\|_{A_{\omega}^2} \|K_w\|_{A_{\omega}^2} \approx \|K_z\|^2_{A^2_\omega} =K(z,z),
    \end{equation*} for $\beta (z,w)< r$. Thus, it is obtained that 
    \begin{equation*}
         \|K_z\|_{A_{\omega}^2} \|K_w\|_{A_{\omega}^2}  \le \vert K(z,w)\vert,
    \end{equation*} where $\vert z-w\vert < \alpha (1-\vert z\vert^2)$.
    
    Moreover, the Cauchy Schwartz inequality implies that
    \begin{equation*}
        \vert K(z,w)\vert =\vert \langle K_z,K_w \rangle \vert \le \|K_z\|_{A^2_\omega}\|K_w\|_{A^2_\omega},
    \end{equation*} which follows from the fact that $w\in D(z,r)$, since $\vert z-w\vert < \alpha (1-\vert z\vert^2)$. Hence, the proof is completed.
\end{proof}

The following theorem provides pointwise estimates on kernel function and its proof is based on the similar argument that appeared in the proof of Theorem 4.2 in \cite{NggSys}. Let $\psi =-e^{-\phi}$, then we obtain that
\begin{align*}
    \overline{\partial} \partial \psi = e^{-\phi} (\overline{\partial}\partial \phi - \vert \partial \phi\vert^2 ).
\end{align*} Hence, $\psi$ is plurisubharmonic, that is, $\overline{\partial}\partial \psi \ge 0$ if and only if the following inequality is satisfied,
\begin{equation}\label{Berndtssonr=1}
    i\partial \phi \wedge \overline{\partial} \phi \le i \partial \overline{\partial}\phi.
\end{equation}

\begin{theorem}\label{Theorem:PointwiseEstimate}
	Let $\mu$ be a positive finite Borel measure on the unit ball $\mathbb{B}$, and let $\nu$ be a positive finite Borel measure on the boundary of the unit ball $\partial \mathbb{B}$. Let $0<r<1$. Then, there exists an $\alpha\in (0,1)$ such that  
	\begin{equation*}
	\vert K_{z}(w)\vert^2 \approx \frac{1}{(1-\vert z\vert^2)^{n+1} (1-\vert w\vert^2)^{n+1} \omega(z) \omega(w) },  
	\end{equation*} if  $\vert z-w\vert < \alpha (1-\vert z\vert^2)$.
 
	Furthermore, for all $t\in (0,1)$ there exists a positive constant $C_t$ so that
 
	\begin{align*}
	\vert K_{z}(w)\vert^2 \le C_t  \|K_z\|_{A_{\omega}^2}^2 \|K_w\|_{A_{\omega}^2}^2 \left( \frac{(1-\vert z\vert^2)(1-\vert w\vert^2)}{\vert 1- \langle z,w \rangle \vert^2}  \right)^t
	\end{align*}
 is satisfied for every $z,w\in \mathbb{B}$.
\end{theorem}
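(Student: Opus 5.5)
The first assertion is a direct combination of results already established. By Corollary~\ref{kernelEsCor}, there is an $\alpha\in(0,1)$ such that $\vert K_z(w)\vert\approx \|K_z\|_{A^2_\omega}\|K_w\|_{A^2_\omega}$ whenever $\vert z-w\vert<\alpha(1-\vert z\vert^2)$. Squaring this and inserting the norm estimate of Theorem~\ref{NormEstimate}, namely $\|K_\zeta\|^2_{A^2_\omega}\approx \big((1-\vert \zeta\vert^2)^{n+1}\omega(\zeta)\big)^{-1}$ for $\zeta=z$ and $\zeta=w$, gives the two-sided local bound. One only has to check, via \eqref{BalphaDz}, that such $w$ lie in $D(z,r)$. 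This membership is what Corollary~\ref{kernelEsCor} uses, and it is also what lets Proposition~\ref{Prop:omegaEquiv} and \eqref{Equiv1} swap $w$ for $z$ in the weight if desired.

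For the global estimate I will follow Berndtsson's $\bar\partial$ approach, as in Theorem~4.2 of \cite{NggSys}. Fix $z,w$ and assume $\beta(z,w)\ge r$; otherwise the claim follows from Cauchy--Schwarz, since the factor in parentheses is then bounded below by \eqref{LemmaEq1}.

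\begin{itemize}
\item \emph{Cut-off.} Choose a cut-off $\chi$ supported in $D(w,r/2)$, equal to $1$ on $D(w,r/4)$, with $\vert\bar\partial\chi\vert\lesssim (1-\vert w\vert^2)^{-1}$ relative to the Bergman metric.
\item \emph{Reduction to the projection.} Let $P$ be the orthogonal projection onto $A^2_\omega$. Since $\chi K_z$ is localized near $w$, the mean value property gives $K_z(w)=P(\chi K_w)(z)$ up to normalization. More conveniently, write $u=\chi K_z - P(\chi K_z)$, which is the $L^2_\omega$-minimal solution of $\bar\partial u=K_z\bar\partial\chi$.
\item \emph{Twisted weight.} Introduce the weight $\phi(\zeta)=t\log\frac{\vert 1-\langle \zeta,w\rangle\vert^2}{1-\vert\zeta\vert^2}$, or a bounded modification of it, and estimate $u$ in $L^2_{\omega e^{\phi}}$.
\item \emph{Berndtsson's inequality.} Berndtsson's theorem \cite{Berndtsson} yields a bound on the minimal solution in the twisted norm when $-\log\omega+\ldots$ is plurisubharmonic and $\phi$ satisfies the self-bounded gradient condition \eqref{Berndtssonr=1} with constant $t<1$:
\begin{equation*}
\int \vert u\vert^2 e^{\phi}\omega\,dv\le C_t\int \vert \bar\partial u\vert^2_{i\partial\bar\partial\psi} e^{\phi}\omega\,dv .
\end{equation*}
\item \emph{Pointwise evaluation.} Apply the sub-mean value estimate (as in the proof of Theorem~\ref{NormEstimate}) to the holomorphic function $P(\chi K_z)=\chi K_z-u$ on $D(z,r/4)$, where $\chi=0$. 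This gives
\begin{equation*}
\vert K_z(w)\vert^2\lesssim \|K_z\|^2\cdot e^{-\phi(z)}\int_{D(w,r/2)}\vert K_z\vert^2 e^{\phi}\omega\,(1-\vert\zeta\vert^2)^{-2}\,dv .
\end{equation*}
\item \emph{Local bounds on $D(w,r/2)$.} On this ball, $e^{\phi}\approx(1-\vert w\vert^2)^{t}$ by \eqref{Equiv1}. Moreover, the subharmonic estimate gives $\int_{D(w,r/2)}\vert K_z\vert^2\omega\lesssim$ a quantity controlled by $\vert K_z\vert$ near $w$, and Cauchy--Schwarz bounds $\vert K_z(\zeta)\vert\le\|K_z\|\|K_\zeta\|$.
\item \emph{Collecting factors.} Using $\|K_\zeta\|\approx\|K_w\|$ on $D(w,r)$ from \eqref{KernelConstant}, and $e^{-\phi(z)}=\big((1-\vert z\vert^2)/\vert1-\langle z,w\rangle\vert^2\big)^{t}$, the product reduces to $\|K_z\|^2\|K_w\|^2\big((1-\vert z\vert^2)(1-\vert w\vert^2)/\vert 1-\langle z,w\rangle\vert^2\big)^t$.
\end{itemize}

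The main obstacle is the $\bar\partial$ step. Berndtsson's estimate needs the total weight $\omega e^{\phi}$, or rather $-\log$ of the weight being twisted, to be plurisubharmonic, and $\phi$ must satisfy $i\partial\phi\wedge\bar\partial\phi\le t\, i\partial\bar\partial\phi$ with respect to a complete metric. For $\omega=\omega_{\mu,q,s,\nu}$, the function $\log\omega$ is not obviously plurisuperharmonic, since $U_{\mu,s}$ and $P_\nu$ are only $\mathcal{M}$-(super)harmonic.

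My first attempt is to use the local comparability of Proposition~\ref{Prop:omegaEquiv}. I would replace $\omega$ by a regularized weight $\tilde\omega\approx\omega$, for example the average of $\omega$ over Bergman balls, whose logarithm has bounded Bergman-metric Hessian. Adding a multiple of $-\log(1-\vert\zeta\vert^2)$ would then restore plurisubharmonicity at the cost of constants. Comparable weights give equivalent norms, and this substitution only alters the constant $C_t$. For $\phi$ itself, I would check the self-bounded gradient condition using that $-\log(1-\vert\zeta\vert^2)$ has Bergman-metric gradient bounded by its Hessian, and that $\log\vert1-\langle\zeta,w\rangle\vert^2$ is pluriharmonic. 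The restriction $t<1$ is exactly what makes this condition hold with room to spare.
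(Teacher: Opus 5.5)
Your local estimate and your handling of the case $\beta(z,w)<r$ agree with the paper. The gap is in how you tie $K_z(w)$ to the $\bar\partial$-problem.

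The identity ``$K_z(w)=P(\chi K_w)(z)$ up to normalization'' comes from a mean-value property that holds for M\"obius-invariant weights such as $(1-|z|^2)^{\alpha}$. It does not hold for $\omega_{\mu,q,s,\nu}$. With $u=\chi K_z-P(\chi K_z)$, what is actually true is $u(z)=-A$, where $A:=\int_{\mathbb{B}}\chi|K_z|^2\omega\,dv\gtrsim |K_z(w)|^2/\|K_w\|^2_{A^2_\omega}$. The sub-mean-value inequality at $z$ therefore bounds $A^2$, not $|K_z(w)|^2$, and your displayed inequality does not follow.

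If you then bound $\int_{D(w,r/2)}|K_z|^2\omega\,dv\lesssim\|K_z\|^2_{A^2_\omega}$ by Cauchy--Schwarz, as planned, you get $A^2\lesssim C_t\|K_z\|^4_{A^2_\omega}X^t$ with $X=\frac{(1-|z|^2)(1-|w|^2)}{|1-\langle z,w\rangle|^2}$. This gives only $|K_z(w)|^2\lesssim C_t\|K_z\|^2_{A^2_\omega}\|K_w\|^2_{A^2_\omega}X^{t/2}$, so the factors do not collect into the exponent $t$.

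The missing idea is to make the right-hand side of Berndtsson's estimate self-referential. Choose $\chi$ (e.g.\ the square of a cut-off) with $|\bar\partial\chi|^2_{i\partial\bar\partial\phi}\lesssim\chi/t$. Then $\int|K_z\bar\partial\chi|^2_{i\partial\bar\partial\phi}e^{\phi}\omega\,dv\lesssim t^{-1}(1-|w|^2)^{t}A$, so $A^2\lesssim C_t\|K_z\|^2_{A^2_\omega}X^tA$, and dividing by $A$ gives the full exponent. This is what the paper does, in dual form. It writes $\int\chi|K_z|^2\omega\,dv=\sup\{|P(f\chi)(z)|^2: f\in H(\mathbb{B}),\ \int\chi|f|^2\omega\,dv=1\}$ and uses $|P(f\chi)(z)|=|u_f(z)|$ off the support of $\chi$. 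It imposes $|\nabla\chi|^2\lesssim\chi/\rho$ precisely so that the $\bar\partial$-data are controlled by $\int\chi|f|^2\omega\,dv$.

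You are right about plurisubharmonicity, and the paper does not address it. It applies Berndtsson's estimate with $\psi=-\log\omega$ without checking that $\psi$ is plurisubharmonic. For $n\ge2$ this can fail: for $\omega=P_\nu$ with $\nu=\delta_{e_1}+\delta_{-e_1}$ one computes $\partial_1\bar\partial_1(-\log\omega)(0)=n-n^2<0$.

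Your remedy, however, does not work as stated. Berndtsson's bound concerns the minimal solution in $L^2(e^{-\psi})$ for the same $\psi$ that is assumed plurisubharmonic.
\begin{itemize}
\item Adding $N\log\frac{1}{1-|\zeta|^2}$ to $\psi$ changes the weight to $(1-|\zeta|^2)^N\tilde\omega$, which is not comparable to $\omega$.
\item Moving the compensating factor into the twist would require $(t+N)\log\frac{1}{1-|\zeta|^2}$, plus a pluriharmonic term, to satisfy the gradient condition with constant $t+N<1$. That is impossible for $N\ge1$, and the example above forces $N$ of size about $n(n-1)$.
\item Even the step $\omega\to\tilde\omega\approx\omega$ is not free. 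Comparable weights give equivalent norms but different reproducing kernels and different minimal solutions. An off-diagonal bound for the kernel of $A^2_{\tilde\omega}$ therefore does not pass to $K_z$ by a change of constants.
\end{itemize}
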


\begin{proof}
	The first statement of the theorem follows from Corollary $\ref{kernelEsCor}$ and Theorem $\ref{NormEstimate}$, that is, there exists an $\alpha\in (0,1)$ such that  
	\begin{align*}
	\vert K_{z}(w)\vert^2 & \approx  \|K_z\|_{A_{\omega}^2}^2 \|K_w\|_{A_{\omega}^2}^2 \\
    & \approx \frac{1}{(1-\vert z\vert^2)^{n+1}(1-\vert w\vert^2)^{n+1}\omega(z)\omega(w)  },
	\end{align*} for $z,w\in \mathbb{B}$, if  $\vert z-w\vert < \alpha (1-\vert z\vert^2)$.
	
	 Let $D_z$ denote the Bergman metric ball centered at $z$ with radius $\frac{r}{2}$. Assume that $t\in(0,1)$ and $z,w\in \mathbb{B}$. We will examine this part by dividing into two cases.

	\par \textbf{Case 1:} If $D_z \cap D_w \ne \emptyset$, where $D_z=D(z, \frac{r}{2})$ and $D_w=D(w,\frac{r}{2})$. According to Lemma \ref{Lemma1.3}, $(1-\vert z\vert^2)$ and $\vert 1-\langle z,w\rangle \vert$ are comparable for all $z,w\in \mathbb{B}$, where $\beta (z,w)<\frac{r}{2}$. Hence, there exists a positive constant $C_t$ such that
	\begin{equation*}
	(\vert 1- \langle z, w \rangle \vert^2)^t \le C_t[(1-\vert z\vert^2)(1-\vert w\vert^2)]^t.
	\end{equation*}
	From the Cauchy-Schwarz inequality, we conclude that
	\begin{align*}
	\vert K(z,w)\vert^2 & \le  \|K_z\|_{A_{\omega}^2}^2 \|K_w\|_{A_{\omega}^2}^2 \\
    & \le   \|K_z\|_{A_{\omega}^2}^2 \|K_w\|_{A_{\omega}^2}^2 C_t \left( \frac{ (1-\vert z\vert^2)(1-\vert w\vert^2) }{ \vert z-w\vert^2 }  \right)^t.
	\end{align*}

 \par \textbf{Case 2:} If $D_z \cap D_w = \emptyset$. Let $\chi$ be a smooth real function such that $0\le\chi\le1$, and $\chi=1$ on $D_w$, $supp(\chi)\subset D_w$ and $\vert \bigtriangledown \chi \vert^2 \lesssim \frac{\chi}{\rho}$. It follows from the subharmonicty of $\vert K_z\vert^2$ that
 \begin{align*}
     \vert K_z(w)\vert^2\omega(w) & \lesssim \frac{1}{\rho^{n+1}(w)} \int_{D(w,\frac{r}{2})} \vert K_z(\zeta)\vert ^2  \omega(\zeta) dv(\zeta) \\
    & \le \frac{1}{\rho^{n+1}(w)} \int_{\mathbb{B}} \vert K_z(\zeta)\vert ^2 \chi(\zeta) \omega(\zeta) dv(\zeta) \\
     & \le \frac{1}{\rho^{n+1}(w)} \sup_{f\in D} \vert \langle f,K_z\rangle_{L^2(\mathbb{B},\chi\omega dv)}\vert ^2,
 \end{align*} where $D=\{f\in H(\mathbb{B}): \|f\|_{L^2(\mathbb{B}, \chi\omega dv)}=1 \}$. We have that 
 \begin{equation*}
     \langle f, K_z\rangle_{L^2(\mathbb{B},\chi\omega dv)}=P(f\chi)(z),
 \end{equation*}
 where $P$ is the orthogonal projection from $L_{\omega}^2(\mathbb{B})$ to $A_{\omega}^2(\mathbb{B})$. Then, the function $u_f=f\chi-P(f\chi)$ is the minimal solution of the equation $\overline{\partial}u_f=\overline{\partial}(f\chi)=f\overline{\partial}\chi$. Hence, we have that $\vert P(f\chi)(z)\vert =\vert u_f(z) \vert$ for every $z \notin D_w$. Thus, it is obtained that
 \begin{equation}\label{KernelEstimate}
     \vert K_z(w)\vert^2\omega(w) \lesssim \frac{1}{\rho^{n+1}(w)}\sup_{f\in D}\vert u_f(z) \vert^2.
 \end{equation} Also, $\vert u_f\vert^2$ is subharmonic since $u_f$ is holomorphic in $D_z$. Therefore,
 \begin{align}\label{u_fEstimate}
     \vert u_f(z)\vert^2 \omega(z) & \lesssim \frac{1}{\rho^{n+1}(z)} \int_{D_z} \vert u_f(\zeta)\vert ^2 \omega (\zeta) dv(\zeta)\\
     & \le \frac{1}{\rho^{n+1}(z)} \int_{\mathbb{B}} \vert u_f(\zeta)\vert ^2 \omega (\zeta) dv(\zeta). \nonumber
 \end{align}

\par Let $w\in \mathbb{B}$, and $t\in (0,1)$, we define the following functions $\phi$ and $\psi$ by
\begin{equation*}
\phi(z)= t\log \frac{\vert 1-\langle z,w\rangle\vert^2 }{1-\vert z\vert^2}, \quad \text{and} \quad
    \psi (z)= - \log \omega(z),
\end{equation*}
for $z\in \mathbb{B}$. Let $w\in \mathbb{B}$. We define the following function 
\begin{equation*}
    \phi_1(z)= \log \frac{\vert 1-\langle z,w\rangle\vert^2 }{1-\vert z\vert^2},
\end{equation*} for $z\in \mathbb{B}$. 
The inequality \eqref{Berndtssonr=1} is satisfied for the function $-e^{-\phi_1}$, since it is plurisubharmonic. Moreover, the following statements
\begin{equation*}
    i\partial \overline{\partial}(t\phi_1)=it\partial \overline{\partial}\phi_1,
\end{equation*} and
\begin{equation*}
    i\partial(t\phi_1) \wedge \overline{\partial}(t\phi_1) =it^2 \partial\phi \wedge \overline{\partial}\phi
\end{equation*} are satisfied, for any $t\in(0,1)$.
Let $t\in(0,1)$. By multiplying $t^2$ to the inequality \eqref{Berndtssonr=1}, we obtain the following result
\begin{equation*}
     it^2\partial \phi_1 \wedge \overline{\partial} \phi_1 \le i t^2 \partial \overline{\partial}\phi_1,
\end{equation*} which implies that
\begin{equation*}
     i\partial (t\phi_1) \wedge \overline{\partial} (t\phi_1) \le i t \partial \overline{\partial}(t\phi_1),
\end{equation*} which is equivalent to the following statement
\begin{equation*}
     i\partial \phi \wedge \overline{\partial} \phi \le i t\partial \overline{\partial}\phi.
\end{equation*} 
Thus, the following condition, which is given in \cite{Berndtsson}, is satisfied
\begin{equation*}
     i\partial \phi \wedge \overline{\partial} \phi \le i t \partial \overline{\partial}\phi,
\end{equation*} for $t\in(0,1)$. Therefore, by using the integral estimate in \cite{Berndtsson} we obtain the following estimate
\begin{align}\label{BerndtssonInequality}
\int_{\mathbb{B}} \vert u_f(\zeta) \vert^2 e^{-\psi(\zeta)+\phi(\zeta)} dv(\zeta) & \lesssim \frac{1}{(1-t)^2} \int_{\mathbb{B}}  \vert f\overline{\partial}\chi\vert_{\partial \overline{\partial}\phi} ^2 e^{-\psi+\phi}  dv,
\end{align} where
\begin{align*}
    \vert f \overline{\partial}\chi\vert^2_{\partial \overline{\partial}\phi}&= \sum_{j,k=1}^{n} (\partial \overline{\partial}\phi)^{j,k} \vert f(z)\vert ^2 \frac{\overline{\partial}\chi}{\partial \overline{z_j}} \frac{\partial \chi}{\partial z_k} ,
\end{align*} and 
$(\partial \overline{\partial}\phi)^{j,k}$ is the inverse matrix to $(\partial \overline{\partial}\phi)_{j,k}=(\frac{\partial^2 \phi}{\partial z_j \overline{\partial}z_k})_{j,k}$. Hence, we need to find the inverse matrix to $M$ whose entries are  $M_{j,k}:=(\frac{\partial^2 \phi}{\partial z_j \overline{\partial}z_k})_{j,k}$. A calculation shows that
\begin{equation*}
    \frac{\partial \phi}{\partial z_j}= \frac{-t\overline{w_j}}{1-\langle z,w\rangle} + \frac{t\overline{z_j}}{1-\vert z\vert^2}.
\end{equation*} Therefore, we obtain that
\begin{equation*}
    \frac{\partial^2 \phi}{\partial z_j \overline{\partial}z_k} =\frac{t\overline{z_j}z_k}{(1-\vert z\vert^2)^2}, \quad \text{if} \quad j\neq k;
\end{equation*} and
\begin{equation*}
    \frac{\partial^2 \phi}{\partial z_j \overline{\partial}z_k} = \frac{t(1-\vert z\vert^2 +\vert z_k\vert^2)}{1-\vert z\vert^2}, \quad \text{if} \quad j=k.
\end{equation*}
Thus, we obtain the following result
\begin{gather*}
M= \frac{t}{(1-\vert z\vert^2)^2}\begin{bmatrix} 1-\vert z\vert^2+\vert z_1\vert^2 & \overline{z_1}z_2 & \cdots & \overline{z_1}z_n \\
 \overline{z_2}z_1 & 1-\vert z\vert^2+\vert z_2\vert^2 & \cdots & \overline{z_2}z_n \\
 \vdots & \ddots &\\
 \overline{z_n}z_1 & \overline{z_n}z_2 & \cdots & 1-\vert z\vert^2+\vert z_n\vert^2
 \end{bmatrix},
\end{gather*} which implies that
\begin{gather*}
M= \frac{t}{(1-\vert z\vert^2)^2} \left( (1-\vert z\vert^2) I_{n \times n} +    \begin{bmatrix} \vert z_1\vert^2 & \overline{z_1}z_2 & \cdots & \overline{z_1}z_n \\
 \overline{z_2}z_1 & \vert z_2\vert^2 & \cdots & \overline{z_2}z_n \\
 \vdots & \ddots &\\
 \overline{z_n}z_1 & \overline{z_n}z_2 & \cdots & \vert z_n\vert^2
 \end{bmatrix} \right),
\end{gather*} where $I_{n\times n}$ is the identity matrix. 
Hence, the matrix $M$ can be written as 
\begin{equation*}
    M= \frac{t}{(1-\vert z\vert^2)^2} \left((1-\vert z\vert^2)I_{n\times n} + z^* z\right),
\end{equation*} where $z^*$ is the adjoint transpose of $z$. Hence,
\begin{equation*}
    M^{-1}= \frac{(1-\vert z\vert^2)^2}{t} \left((1-\vert z\vert^2)I_{n\times n} +  z^* z\right)^{-1}.
\end{equation*} We apply the following formula \cite{Miller},
\begin{equation}\label{InverSumMatrix}
    (A+B)^{-1}= A^{-1}- \frac{1}{1+g}A^{-1}BA^{-1},
\end{equation} where $g=tr(BA^{-1})$, by taking $A:=(1-\vert z\vert^2)I_{n\times n} $ and $B= z^*z$. Firstly, we need the following matrices
\begin{equation*}
    A^{-1}=\frac{1}{1-\vert z\vert^2} I_{n\times n},
\end{equation*} and
\begin{equation*}
    BA^{-1} = \frac{1}{1-\vert z\vert^2}B,
\end{equation*} and
\begin{equation*}
    A^{-1}BA^{-1}=\frac{1}{(1-\vert z\vert^2)^2}B.
\end{equation*} Moreover, note that $g=\frac{\vert z\vert^2}{1-\vert z\vert^2}$. Therefore, it is obtained that
\begin{align*}
     M^{-1}& =  \frac{(1-\vert z\vert^2)^2}{t} \left( \frac{1}{1-\vert z\vert^2} I_{n\times n} - (1-\vert z\vert^2)\frac{1}{(1-\vert z\vert^2)^2} z^*z \right) \\
      &= \frac{(1-\vert z\vert^2)}{t} \left(I_{n\times n} - z^*z\right).
\end{align*} Consequently, the entries of $M^{-1}$ are 
\begin{equation*}
    M^{-1}_{j,k}= \frac{\vert z\vert^2-1}{t} \overline{z_j}z_k, \quad \text{if} \quad j\neq k;
\end{equation*} and
\begin{equation*}
    M^{-1}_{j,k}= \frac{1-\vert z\vert^2}{t}(1-\vert z_j\vert^2), \quad \text{if} \quad j=k.
\end{equation*}
 Hence, we have the following,
\begin{align*}
    \vert f \overline{\partial} &\chi \vert^2_{\partial \overline{\partial}\phi}= \sum_{j\neq k=1}^{n} \frac{\vert \zeta\vert^2-1}{t} \overline{\zeta_j}\zeta_k \frac{\overline{\partial}\chi}{\partial\overline{\zeta_j}} \frac{\partial \chi}{\partial\zeta_k} 
    \vert f(\zeta)\vert ^2 \\
    & + \sum_{j=1}^{n} \frac{1-\vert \zeta\vert^2}{t} (1-\vert \zeta_j\vert^2) \vert \frac{\partial \chi}{\partial \zeta_j} \vert^2 \vert f(\zeta)\vert ^2  \\
    & = \vert f(\zeta)\vert ^2 \left( \sum_{j\neq k=1}^{n} \frac{\vert \zeta\vert^2-1}{t} \overline{\zeta_j}\zeta_k \frac{\overline{\partial}\chi}{\partial\overline{\zeta_j}} \frac{\partial \chi}{\partial\zeta_k}\right) \\
    &+  \vert f(\zeta)\vert ^2 \left(\sum_{j=1}^{n} \frac{1-\vert \zeta\vert^2}{t}  \vert \frac{\partial \chi}{\partial \zeta_j}\vert^2 - \sum_{j=1}^{n} \frac{1-\vert \zeta\vert^2}{t} \vert \zeta_j\vert^2 \vert \frac{\partial \chi}{\partial \zeta_j}\vert^2 \right) \\
    & =\vert f(\zeta)\vert ^2 \left( \frac{\vert \zeta \vert^2-1}{t} \vert \zeta_1 \frac{\partial \chi}{\partial\zeta_1} + \ldots + \zeta_n \frac{\partial \chi}{\partial\zeta_n} \vert^2 + \frac{1-\vert \zeta \vert^2}{t} \sum_{j=1}^{n}  \vert \frac{\partial \chi}{\partial \zeta_j}\vert^2  \right) \\
    & = \vert f(\zeta)\vert ^2  \frac{1-\vert \zeta \vert^2}{t} ( -\vert \bigtriangledown \chi . \overline{\zeta} \vert^2 + \bigtriangledown \chi . \bigtriangledown \chi  )\\
    & \le  \vert f(\zeta)\vert ^2  \frac{1-\vert \zeta \vert^2}{t} \vert \bigtriangledown \chi\vert^2 \\
    & \lesssim  \vert f(\zeta)\vert ^2  \frac{1-\vert \zeta \vert^2}{t} \frac{\chi(\zeta)}{\rho(\zeta)},
\end{align*} where the last inequality follows from the assumption on $\chi$.

As a consequence of the above calculation, the following integral estimate is obtained,
\begin{align}\label{integralcalculation}
   \int_{\mathbb{B}}  \vert f\overline{\partial}\chi\vert_{\partial \overline{\partial}\phi} ^2 e^{-\psi+\phi} dv & \lesssim \int_{\mathbb{B}} \vert f(\zeta)\vert ^2  \frac{1-\vert \zeta \vert^2}{t} \frac{\chi(\zeta)}{\rho(\zeta)}e^{-\psi(\zeta)+\phi(\zeta)} dv(\zeta).
\end{align} 
Hence, as a combination of \eqref{BerndtssonInequality} and \eqref{integralcalculation} and the assumption on $\chi$ we obtain that
\begin{align}\label{integralcalculation-UF}
    \int_{\mathbb{B}} \vert u_f(\zeta) \vert^2 
    \nonumber
    & e^{-\psi(\zeta)+\phi(\zeta)} dv(\zeta)  \lesssim \frac{1}{(1-t)^2} \int_{\mathbb{B}} \vert f(\zeta)\vert ^2  \frac{1-\vert \zeta \vert^2}{t} \frac{\chi(\zeta)}{\rho(\zeta)}e^{-\psi(\zeta)+\phi(\zeta)} dv(\zeta) \\ \nonumber
    & = \frac{1}{t(1-t)^2} \int_{\mathbb{B}}\chi (\zeta) \frac{\vert 1-\langle \zeta ,w\rangle \vert^{2t}}{(1-\vert \zeta \vert^2)^{t}} \vert f(\zeta)\vert^2 \omega(\zeta) dv(\zeta)\\ \nonumber
    & \le \frac{1}{t(1-t)^2} \int_{D_w}  (1-\vert \zeta \vert^2)^{-t} \vert 1-\langle \zeta ,w\rangle \vert^{2t}\vert f(\zeta)\vert^2 \omega(\zeta) dv(\zeta)  \\ \nonumber
    & \approx \frac{1}{t(1-t)^2} \int_{D_w}  (1-\vert \zeta \vert^2)^{-t} (1-\vert \zeta\vert^2)^t (1-\vert w\vert^2)^t \vert f(\zeta)\vert^2 \omega(\zeta) dv(\zeta)  \\ \nonumber
    & \approx \frac{1}{t(1-t)^2}  (1-\vert w \vert^2)^{t} \int_{D_w}  \vert f(\zeta)\vert^2 \omega(\zeta) dv(\zeta)   \\ 
    & \le \frac{(1-\vert w\vert^2)^{t}}{t(1-t)^2}. 
\end{align} 
Hence, we have that
\begin{align*}
    \vert u_f(z)\vert^2 e^{-\psi(z)+\phi(z)} 
    &\lesssim \frac{1}{\rho^{n+1}(z)}\int_{D_z}\vert u_f(\zeta)\vert^2 e^{-\psi(\zeta)+\phi(\zeta)} dv(\zeta) \\
    & \le \frac{1}{\rho^{n+1}(z)}\int_{\mathbb{B}}\vert u_f(\zeta)\vert^2 e^{-\psi(\zeta)+\phi(\zeta)} dv(\zeta),
\end{align*} which implies the following
\begin{align*}
    \vert u_f(z)\vert^2 & \lesssim e^{\psi(z)-\phi(z)}  \frac{1}{\rho^{n+1}(z)}\int_{\mathbb{B}}\vert u_f(\zeta)\vert^2 e^{-\psi(\zeta)+\phi(\zeta)} dv(\zeta) \\
    & = \frac{(1-\vert z\vert^2)^t}{\omega(z)\vert 1-\langle z,w \rangle \vert^{2t}} \frac{1}{\rho^{n+1}(z)} \int_{\mathbb{B}}\vert u_f(\zeta)\vert^2 e^{-\psi(\zeta)+\phi(\zeta)} dv(\zeta).
\end{align*} Consequently, by combining the previous result with \eqref{KernelEstimate} and \eqref{integralcalculation-UF} we obtain that
\begin{align*}
    \vert K_z&(w)\vert ^2  \lesssim \frac{1}{\omega(w)\rho^{n+1}(w)} \sup_{f\in D} \vert u_f(z)\vert ^2\\
    & \lesssim \frac{1}{\omega(w)\rho^{n+1}(w)} \frac{1}{\omega(z)\rho^{n+1}(z)} \frac{(1-\vert z\vert^2)^t}{\vert 1-\langle z,w \rangle \vert^{2t}} \int_{\mathbb{B}}\vert u_f(\zeta)\vert^2 e^{-\psi(\zeta)+\phi(\zeta)} dv(\zeta) \\
    & \lesssim \frac{1}{\omega(w)\rho^{n+1}(w)} \frac{1}{\omega(z)\rho^{n+1}(z)} \frac{(1-\vert z\vert^2)^t}{\vert 1-\langle z,w \rangle \vert^{2t}}  \frac{(1-\vert w\vert^2)^{t}}{t(1-t)^2} \\
    & \lesssim  \frac{1}{\omega(w)\rho^{n+1}(w)} \frac{1}{\omega(z)\rho^{n+1}(z)} \left(\frac{(1-\vert z\vert^2)(1-\vert w\vert^2)}{\vert 1-\langle z,w\rangle \vert^2}\right)^t \frac{1 }{t(1-t)^2}\\
    &= C_t\| K_w\|^2_{A^2_\omega} \|K_z\|^2_{A^2_\omega} \left(\frac{(1-\vert z\vert^2)(1-\vert w\vert^2)}{\vert 1-\langle z,w \rangle \vert^2}\right)^t ,
\end{align*} for the case when $D_z\cap 
D_w = \emptyset$, where $C_t=(t(1-t)^2)^{-1}$.
\end{proof}

\begin{remark}
The exponent $t \in (0,1)$ in Theorem~\ref{Theorem:PointwiseEstimate} is optimal in the following sense. 
The bound
\[
|K_z(w)|^{2} \lesssim 
\left(
  \frac{(1-|z|^{2})(1-|w|^{2})}
       {|1-\langle z,w\rangle|^{2}}
\right)^{t},
\qquad z,w \in B,
\]
cannot be extended beyond $t=1$ even for the classical weight 
$\omega(z) = (1-|z|^{2})^{\alpha}$ with $\alpha > -1$, 
since in that case the integral kernel ceases to belong to 
$L^2_{\omega}(B)$. 
This shows that the obtained decay rate coincides with the natural geometric limit 
of the $\bar\partial$-method.
\end{remark}

\section*{Acknowledgements}
This research was funded by a TUBITAK project with project number 118F405.

\section*{Author Contributions}
All the co-authors contributed equally in all aspects of the preparation of this submission.

\section*{Conflict of Interest}
The authors declare that they have no known competing financial interests or personal relationships that could have appeared to influence the work reported in this paper.

\section*{Funding}
This work was supported by TUBITAK with project number 118F405.

\section*{Data Availability}
No data were used for the research described in this article.

\end{document}